\documentclass[11pt]{amsart}
\usepackage{}
\usepackage{mathrsfs}
\usepackage{amsfonts}

\usepackage{amssymb,amsthm,amsmath,hyperref,txfonts}

\usepackage{graphicx,float}

\numberwithin{equation}{section}

\newtheorem{thm}{Theorem}[section]
\newtheorem{coro}{Corollary}[section]
\newtheorem{lem}{Lemma}[section]
\newtheorem{rem}{Remark}[section]
\newtheorem{prop}{Proposition}[section]
\newtheorem{defn}{Definition}[section]

\newcommand{\beq}{\begin{eqnarray}}
\newcommand{\eeq}{\end{eqnarray}}
\newcommand{\beqno}{\begin{eqnarray*}}
\newcommand{\eeqno}{\end{eqnarray*}}
\newcommand{\be}{\begin{equation}}
\newcommand{\ee}{\end{equation}}
\newcommand{\beno}{\begin{equation*}}
\newcommand{\eeno}{\end{equation*}}

\newcommand\dl{\delta}
\newcommand\tl{\tilde}

\newcommand\Dl{\Delta}

\newcommand\N{\mathbb{N}}
\newcommand\nb{\nabla}
\newcommand\nn{\nonumber}
\newcommand{\R}{\mathbb{R}}
\newcommand\Z{\mathbb{Z}}
\newcommand\sig{\sigma}

\newcommand\fr{\frac}
\newcommand\al{\alpha}
\newcommand{\dv}{\mathrm{div}}

\newcommand\les{\lesssim}
\newcommand\lm{\lambda}
\newcommand\Lm{\Lambda}

\newcommand\om{\omega}
\newcommand\Pe{\mathbb{P}}
\newcommand\pr{\partial}
\newcommand{\Rey}{\mathrm{Re}}
\newcommand{\We}{\mathrm{We}}
\newcommand{\ddl}{\dot{\Dl}_q}

\allowdisplaybreaks

\topmargin       -0.40in \oddsidemargin    0.08in \evensidemargin
0.08in \marginparwidth   0.00in \marginparsep     0.00in \textwidth
15.5cm \textheight 23.5cm

\begin{document}
\title[Oldroyd-B model with a class of large initial data]{Global solutions to the Oldroyd-B model with a class of large initial data}

\author{Daoyuan Fang, Ruizhao Zi}

\address{Department of Mathematics, Zhejiang University,
Hangzhou 310027, P. R. China}
\email{dyf@zju.edu.cn}

\address{School of Mathematics and Statistics, Central
China Normal University, Wuhan 430079, P. R. China}
\email{rzz@mail.ccnu.edu.cn}

\subjclass[2010]{76A10, 76D03}

\keywords{ Oldroyd-B model, critical Besov space, global existence, large data}

\begin{abstract}
Consider a global wellposed problem for the incompressible
Oldroyd-B model. It is shown that this set of equations admits a unique global solution provided the initial horizontal velocity $u^h_0$, the product $\om u^d_0$ of the coupling parameter $\om$ and initial the vertical velocity $u^d_0$, and initial symmetric tensor of constrains $\tau_0$ are sufficient small in the scaling invariant
Besov space $\dot{B}^{\fr{d}{2}-1}_{2,1}\times\dot{B}^{\fr{d}{2}}_{2,1}, d\ge2$.
In particular, the result implies the global well-posedness
of Oldroyd-B model with large initial vertical velocity $u_0^d$.
\end{abstract}
\maketitle

\section{Introduction}
\noindent In this paper, we focus on a prototypical model for viscoelastic fluids, the so called Oldroyd-B
model, which was originally  introduced by  J.G. Oldroyd
\cite{Oldroyd58}.  This type of fluids is  described by the following set of equations
\begin{eqnarray}\label{IOBdimensionless}
\begin{cases}
\mathrm{Re}\left(u_t+(u\cdot\nabla) u\right)-(1-\om)\Delta u+\nabla
\Pi=\dv\tau,\\
\mathrm{We}(\tau_t+(u\cdot\nabla)\tau+g_\al(\tau,
\nabla u))+\tau=2\om D(u), \\
\dv u=0,
\end{cases}
\end{eqnarray}
where $u$ is the velocity of the fluid, $\tau$ is the symmetric tensor of constrains,  and $\Pi$ is the pressure which can be regarded as the Lagrange multiplier for the divergence free condition. $\mathrm{Re}$ and  $\mathrm{We}$ denote
the Reynolds and Weissenberg number of the fluid, respectively. $\om\in(0,1)$ is the coupling parameter. Moreover, the quadratic term
$g_\al$ is given by $g_\al(\tau, \nabla u):=\tau W(u)-W(u)\tau-\al\left(D(u)\tau+\tau D(u)\right)$
for some $\al\in[-1,1]$, where $D(u)=1/2(\nabla u+(\nabla u)^\top)$ is the
deformation tensor and $W(u)=1/2(\nabla u-(\nabla u)^\top)$ is the vorticity tensor.

A brief derivation of system \eqref{IOBdimensionless} is as follows.
In fact, the dynamics of homogeneous, isothermal, and incompressible fluid  is described by
\begin{eqnarray}\label{velocity}
\begin{cases}
\begin{array}{rll}
\partial_t u+(u\cdot\nabla)u&=&\dv \mathbb{T},\\
\dv u&=&0,
\end{array}
\end{cases}
\end{eqnarray}
where $u, \mathbb{T}$ are the velocity and stress tensor, respectively. Moreover, $\mathbb{T}$ can be
decomposed into $-p\, \mathrm{Id}+\tau$, where $p$ denotes the pressure of the fluid and $\tau$  the
tangential part of the stress tensor, respectively. For the Oldroyd-B model, $\tau$ is given by
the relation
\begin{equation}\label{tauequation}
\tau+\lambda\frac{\mathcal{D}_\al\tau}{\mathcal{D}t}=2\eta\left(D(u)+\mu\frac{\mathcal{D}_\al D(u)}{\mathcal{D}t}\right),
\end{equation}
where $\frac{\mathcal{D}_\al}{\mathcal{D}t}$ denotes the ``objective derivative"
$$
\frac{\mathcal{D}_\al\tau}{\mathcal{D}t}=\partial_t\tau+(u\cdot\nabla)\tau+g_\al(\tau,\nabla u).
$$
The parameters
$\lambda>\mu>0$ denote  the relaxation and retardation time, respectively, and $\eta$ is the viscosity of the fluid.

The tangential part of the stress tensor $\tau$ can be decomposed as $\tau=\tau_N+\tau_e$
where $\tau_N=2\eta\frac{\mu}{\lambda}D(u)$ corresponds to the Newtonian part and
$\tau_e$ to the purely elastic part.
Denoting $\tau_e=\tau$, the above equations can be rewritten as
\begin{eqnarray}\label{utau}
\begin{cases}
\begin{array}{rll}
\partial_t u+(u\cdot\nabla)u-\eta(1-\om)\Delta u+\nabla p&=&\dv\tau,\\
\dv u&=&0,\\
\lambda(\pr_t\tau+(u\cdot\nabla)\tau+g_\al(\tau,
\nabla u))+\tau&=&2\eta\om D(u),
\end{array}
\end{cases}
\end{eqnarray}
with the coupling parameter $\om:=1-\frac{\mu}{\lambda}\in (0,1)$. Using dimensionless variables, Oldroyd-B model can be thus described by the system
\eqref{IOBdimensionless}. For more details of the modeling,  c.f., \cite{Talhouk94,FZ14} for example.

The theory of Oldroyd-B fluids recently gained quite some attention. According to the coupling strength  between the velocity $u$ and the symmetric tensor of the constrains $\tau$, the results on Oldroyd-B model fall into two categories. \par
\noindent{\bf (I) {\em Weak } coupling case: small coupling parameter $\om$.}
The study  of the incompressible Oldroyd-B model was started by a pioneering
paper given by Guillop\'e and Saut \cite{GS90}. They proved that (i) system \eqref{IOBdimensionless} admits a unique local strong solution in
suitable Sobolev spaces $H^s(\Omega)$ for  {\em bounded domains} $\Omega \subset \R^3$; and (ii) this solution
is global provided the data as well as the coupling
parameter $\om$  are sufficiently small.
For extensions to this results to the $L^p$-setting, see the work of
Fernand\'ez-Cara, Guill\'en and Ortega \cite{FGG98}. The situation of {\em exterior domains} was considered
first by  Hieber, Naito and Shibata \cite{HNS12}, where the existence of a unique global strong
solution defined in certain function spaces was proved provided the
initial data and the coupling parameter $\om$ are small enough.
Another remarkable  way to study the Oldroyd-B model \eqref{IOBdimensionless} is constructing solutions in  {\em scaling invariant} spaces. This approach goes back to the seminal work by Fujita and Kato \cite{FK64} for the classical incompressible Navier-Stokes equations.  While  for the Oldroyd-B model, strictly speaking, the system \eqref{IOBdimensionless} does not have any scaling invariance. Fortunately, if we neglect the coupling term $\dv \tau$ and the damping term $\tau$, it is found that \eqref{IOBdimensionless} is invariant under the transformation
\begin{gather}\label{scaling}
\begin{cases}
(u_0, \tau_0)\rightarrow (\ell u_0(\ell x),\tau_0(\ell x)),\\
( u(t,x), \tau(t,x),\Pi(t,x))\rightarrow (\ell u(\ell x,\ell^2t), \tau(\ell x,\ell^2t), \ell^2\Pi(\ell x,\ell^2t)),
\end{cases}
\end{gather}
for any $\ell>0$.
Chemin and Masmodi \cite{CM01} first study the Oldroyd-B model \eqref{IOBdimensionless}  alone this line. For small coupling parameter $\om$, they showed the
global well-posedness of system \eqref{IOBdimensionless} with initial data $(u_0,\tau_0)$ belonging to the critical space
$\left(\dot{B}^{\fr{d}{p}-1}_{p,1}\right)^d\times \left(\dot{B}^{\fr{d}{p}}_{p,1}\right)^{d\times d}$ for any $p\in[1,\infty)$.

 \noindent{\bf (II) {\em Strong } coupling case: arbitrary coupling parameter $\om\in(0,1)$.} Oldroyd-B model with strong coupling parameter describes the behavior of the viscoelastic fluids  more appropriately.
The first result on the system \eqref{IOBdimensionless} with strong coupling parameter was obtained by  Molinet and Talhouk \cite{MT04} for {\em bounded domains}.
They removed
the smallness restriction on the coupling parameter $\om$ in
\cite{GS90}. For the situation of {\em exterior domains},   Hieber and the authors recently \cite{FHZ13} improved
the main result given in \cite{HNS12} to the situation of non-small coupling parameter. Constructing solutions in {\em scaling invariant} spaces with {\em strong} coupling is more difficult. Some efforts were made by Chen and Miao in \cite{Chen-Miao08}, where they established global solutions to  the incompressible Oldroyd-B model with small initial data in
$B^s_{2,\infty}, s>\frac{d}{2}$. Later on, this result was generalized by  the second author \cite{Zi13} with  data in $\dot{H}^s\cap\dot{B}^{\fr{d}{2}}_{2,1}, -\fr{d}{2}<s<\fr{d}{2}-1$.  It was not until very recently that  Zhang and the authors \cite{Zi-Fang-Zhang14} obtained the scaling invariant solution with  strong coupling in the critical $L^p$  framework. For the {\em weak solutions} of incompressible Oldroyd-B fluids,  see the work of   Lions and Masmoudi \cite{LM00} for the case $\al=0$. The general case $\al\neq0$ is still open up to now. As for  the {\em blow-up criterions} of  the incompressible Oldroyd-B model,
there are works  \cite{CM01,KMT08,LMZ10}.  Besides, we would like to mention that
 Constantin and Kliegl \cite{CK12} proved the global regularity of solutions in two dimensional case for the Oldroyd-B fluids with {\em diffusive stress}.    An approach based on  {\em Lagrangian particle dynamics} can be found in \cite{Lei10,Lei07,LLZ08,LZ05,LLZ05,LZ08,Qian10,Zhang-Fang12}.

On the other hand, for the classical Navier-Stokes equations, taking advantage of the the algebraical structure of the incompressible condition $\dv u=0$, some recent progresses have been made such as \cite{Zhang09,PZ11, CG10, CGP11}. Following these ideas,  Zhang \cite{Zhang14} proved the existence and uniqueness of  solutions to system related to the incompressible viscoelastic fluids with
a class of large initial data.

Motivated by \cite{Zhang09, Zhang14}, the aim of this paper is to  investigate the global well-posedness of system \eqref{IOBdimensionless} in critical Besov spaces with the vertical component of the initial velocity field being
large. Basically it is the algebraical structure of system \eqref{IOBdimensionless} that enables us to consider this type of initial data. More precisely, using the divergence free condition $\dv u=0$, the vertical component of the convection term $u\cdot\nb u^d$ can be rewritten as
$u^h\pr_hu^d-u^d\dv_hu^h$. As a result, the
vertical component $u^d$ of the velocity satisfies a linear equation with coefficients depending on the horizontal
components $u^h$
and $\tau$. This is the reason why we do not believe we have to impose the smallness condition on the initial vertical velocity $u^d_0$. While the equation for the horizontal velocity $u^h$ contains bilinear terms
in the horizontal components and also terms describing the interactions between the
horizontal components and the vertical one. This necessitates the smallness restriction on the initial horizontal velocity $u^h_0$. Besides, since the equation of the symmetric tensor of constrains $\tau$ is nonlinear, considering the interactions between the velocity $u$ (with large vertical component) and $\tau$, imposing a smallness condition on $\tau_0$ is necessary. In addition, we would like to emphasize that both the equation of vertical velocity and horizontal velocity are linearly  coupled with $\tau$, and conversely, the equation of the symmetric tensor of constrains $\tau$ is linearly coupled with $u$ as well. In particular, the coupling parameter $\om$ is involved. In order to decouple $u$ and $\tau$ linearly, one  way to do is simply to apply the estimates for {\em heat equation} and {\em transport equation with damping} to the velocity $u$ and the symmetric tensor of constrains $\tau$ respectively, and to substitute the estimate for $\tau$ to that for $u$. Nevertheless, in doing this, the coupling parameter $\om$ must be small enough to close the estimate of $u$.  This  less interesting {\em weak} coupling case is exactly what we want to avoid considering.

Although the {\em strong} coupling case with initial data in $L^p$ type critical Besov space was investigated by us in \cite{Zi-Fang-Zhang14}, large initial vertical velocity is not allowed in \cite{Zi-Fang-Zhang14}. A natural question is that can we solve system \eqref{IOBdimensionless} with {\em strong} coupling parameter $\om$ and {\em large} initial vertical velocity $u^d_0$? Before answering it, let us give some heuristic observations. Given large initial vertical velocity, it has been pointed out above that the horizontal velocity $u^h$ and the symmetric tensor of constrains $\tau$ should be small. In the circumstances, the linear term $2\om D(u)$ should also be small otherwise  the equation $\eqref{IOBdimensionless}_2$ of $\tau$ can not hold. Since we can not expect the vertical velocity to be small, it is reasonable to construction solutions to system \eqref{IOBdimensionless} with the product $\om u^d$ of the coupling parameter $\om$ and the vertical velocity $u^d$ small. That's why we need the initial product $\om u^d_0$ to be small in this paper.

Let us now explain the main ingredients of the proof. Noting that the algebraical structure of system \eqref{IOBdimensionless} plays a key role in our result, the method used in \cite{Zi-Fang-Zhang14} which relied heavily on the Green matrix of the linearized system of \eqref{IOBdimensionless} and was carried our by Lagrangian approach fails to work here. It is because the divergence free  condition does not hold anymore in Lagrangian coordinate. In this paper, we exploit energy approach in the critical $L^2$ framework.
 As  the damping $\tau$ in the equation $\eqref{IOBdimensionless}_2$ may mislead us to the {\em weak} coupling case, so a crucial step would be to find a structural mechanism which indicates a parabolic smoothing effect on $u$ in the absence of the damping effect on $\tau$. To the best of our knowledge, in $L^2$ framework, researchers \cite{MT04,FHZ13,Chen-Miao08,Zi13} were inclined to use the cancelation relation
  \be\label{cancelation}
  (\dv\tau|u)+(D(u)|\tau)=0
  \ee
 to eliminate the effect of the coupling parameter $\om$. The price they had to pay was  that the solution space served for $u$ and $\tau$ would be of the same regularity, which violated the scaling. So our strategy is not to estimate $(u,\tau)$ as a whole with an eye to \eqref{cancelation}, but to find a new quantity which obeys the scaling and can be used as a substitute for $\tau$. An ideal candidate is $\Pe\dv\tau$ because on the one hand, $\Pe\dv\tau$ is the exact quantity  linearly coupled with $u$ and obeying the scaling. On the other hand, it has been shown in \cite{Zi-Fang-Zhang14} that the Green matrix $G(t,x)$ of  the following auxiliary system of $(u, \Pe\dv\tau)$
\beq\label{auxiliary-liOB}
\begin{cases}
\begin{array}{rrl}
\mathrm{Re}\,u_t-(1-\om)\Delta u-\Pe\dv\tau&=&-\mathrm{Re}\,\Pe(u\cdot\nb)u,\\
\mathrm{We}\,\Pe\dv\tau_t+\Pe\dv\tau-\om \Dl u&=&-\mathrm{We}\,\left(\Pe\dv(u\cdot\nb)\tau+\Pe\dv g_\al(\tau,\nb u)\right), \\
u(0) &=&u_0, \\
\Pe\dv \tau(0)&=&\Pe\dv \tau_0.
\end{array}
\end{cases}
\eeq
behaves just like the heat kernel in the low frequency case. In this paper, we will pay much attention to the system \eqref{auxiliary-liOB}  in the spirit of Danchin \cite{Danchin00}. More precisely, we first localize \eqref{auxiliary-liOB}  in frequencies according
to the Littlewood-Paley decomposition. Then a variant energy method will be used  to
estimate each dyadic block $(\ddl u,\ddl \Pe\dv\tau)$. Owing to the fact that the linear operator associated with \eqref{auxiliary-liOB} can not be diagonalized in a basis independent of $\xi$,  coercive estimates can not be achieved by standard energy argument. Since the parabolic-hyperbolic system \eqref{auxiliary-liOB} behaves differently in high and low frequencies, we shall consider the following quantity (for some $q_0, q_1\in\Z$ to be chosen later):
\beqno\label{Y}
Y_q=\begin{cases}
\begin{array}{lrl}
\left[\|u_q\|_{L^2}^2+2\left\|\fr{\We}{\Rey}\Pe\dv\tau_q\right\|_{L^2}^2+2(u_q|\fr{\We}{\Rey}\Pe\dv\tau_q)+2( u_q|\fr{\We}{\Rey}\Dl\Pe\dv\tau_q)\right]^{\fr12}, & \mathrm{for}& q\le q_1,\\[3mm]

\left[2\| u_q\|_{L^2}^2+\left\|\fr{(1-\om)\We}{\om\Rey}\Pe\dv\tau_q\right\|_{L^2}^2-2\left( u_q|\fr{(1-\om)\We}{\om\Rey}\Pe\dv\tau_q\right)\right]^{\fr12}, & \mathrm{for}& q>q_0,\\[3mm]

\left[\| u_q\|_{L^2}^2+\fr{\We}{\om\Rey}\|\Lm^{-1}\Pe\dv\tau_q\|_{L^2}^2
\right]^{\fr12}, & \mathrm{for}& q_1<q\le q_0.
\end{array}
\end{cases}
\eeqno
Some words about the definition of $Y_q$ are in order.
\begin{itemize}
\item For $q\le q_1$, the parabolic smoothing effect of $\Pe\dv\tau$ is obtained with the aid of the linear coupling term $-\Pe\dv\tau$ in $\eqref{auxiliary-liOB}_1$, that's why we introduce the cross term $( u_q|\fr{\We}{\Rey}\Dl\Pe\dv\tau_q)$.

\item For $q>q_0$, compared with the linear coupling term $-\Pe\dv\tau$ in $\eqref{auxiliary-liOB}_1$,  the damping term $\Pe\dv\tau$ in $\eqref{auxiliary-liOB}_2$ is predominate, and thus there is a damping effect on $\Pe\dv\tau$.
\item For $q_1<q\le q_0$, a new quantity $\Lm^{-1}\Pe\dv\tau$ is introduced to fill up the gap between $q_1$ and $q_0$. Indeed, in this case, we do not need to care about the scaling thanks to Bernstein's inequalities. On this basis, the cancelation relation \eqref{cancelation} motivates us to choose  $\Lm^{-1}\Pe\dv\tau$ to replace $\Pe\dv\tau$. In doing so, $u$ and $\Lm^{-1}\Pe\dv\tau$ are linearly decoupled, and $\Lm^{-1}\Pe\dv\tau$  also possesses a damping effect because of the damping term. Therefore, in this paper we combine the case $q>q_0$ with $q_1<q\le q_0$, and regard them as high frequency. The left part $q\le q_1$ is regarded as low frequency.
\end{itemize}
We would like to remark that similar construction of $Y_q$ was also used by the second author in \cite{Zi14} for the compressible Oldroyd-B model.
However, for $q_1<q\le q_0$, the quantity in \cite{Zi14} corresponding to $Y_q$ is
\beqno
\tl{Y}_q:=\left[\|\Pe u_q\|_{L^2}^2+\fr{\We}{\om\Rey}\|\Lm^{-1}\Pe\dv\tau_q\|_{L^2}^2
-\fr{2\beta(1-\om)\We}{\om\Rey}(\Pe u_q|\Pe\dv\tau_q)\right]^{\fr12},
\eeqno
with some positive constant $\beta$. Since the cross term $(\Pe u_q|\Pe\dv\tau_q)$ is involved in $\tl{Y}_q$, $\Lm^{-1}\Pe\dv\tau$ in \cite{Zi14} exhibits the parabolic smoothing effect stemming from the linear coupling term $-\Pe\dv\tau$. Hence the case $q_1<q\le q_0$ is regarded as low frequency in \cite{Zi14}.  The point is, though, that $\tl{Y}_q$ can not be used in this paper to replace $Y_q$ because the parabolic smoothing effect for  $\Lm^{-1}\Pe\dv\tau$ and hence for $u$ is too weak (depending on $\om$) to deal with the linear coupling term $2\om D(u)$ in $\eqref{IOBdimensionless}_2$.

Obviously, the above analysis do not take into account the anisotropy. In order to distinguish  the velocity $u$ from different directions,  we must bound $(u^h,(\Pe\dv\tau)^h)$ and $(u^d,(\Pe\dv\tau)^d)$ separately (please refer to Section \ref{sec-apriori} for more details).

We shall obtain the existence and uniqueness of a solution $(u,\tau)$ to \eqref{IOBdimensionless} in the following space.
\begin{defn}\label{space}
For $T>0$,  and $s\in\R$, let us denote
\beno
\mathcal{E}^s_T:=\left(\tl{C}_T(\dot{B}^{s-1}_{2,1})\cap L^1_T(\dot{B}^{s+1}_{2,1})\right)^d\times\left(\tl{C}_T(\dot{B}^{s}_{2,1})\cap L^1_T(\dot{B}^{s}_{2,1})\right)^{d\times d}.
\eeno
We use the notation $\mathcal{E}^s$ if $T=\infty$, changing $[0, T]$ into $[0,\infty)$ in the definition above.
\end{defn}

Our main result is stated as follows:
\begin{thm}\label{thm-g}
Let $d\ge2$. Assume that $(u_0,\tau_0)\in \left(\dot{B}^{\fr{d}{2}-1}_{2,1}\right)^d\times \left(\dot{B}^{\fr{d}{2}}_{2,1}\right)^{d\times d}$ with $\dv u_0=0$.
There exists a positive constant $M$, depending on $d,  \Rey$ and $\We$, but independent of $\om$,such that if
                                      \be\label{data}
                                      \fr{C_0}{(1-\om)^9}\left(\|u^h_0\|_{\dot{B}^{\fr{d}{2}-1}_{2,1}}+\om\|u^d_0\|_{\dot{B}^{\fr{d}{2}-1}_{2,1}}
+\|\tau_0\|_{\dot{B}^{\fr{d}{2}-1}_{2,1}}\right)\exp\left(\fr{C_0}{(1-\om)^{12}}\|u^d_0\|_{\dot{B}^{\fr{d}{2}-1}_{2,1}}
^2\right)\le 1,
                                      \ee
then system \eqref{IOBdimensionless} admits a unique global solution $(u,\tau)$ in $\mathcal {E}^{\fr{d}{2}}$ with
\beq
\nn&&\|u^d\|_{\tl{L}^\infty_t(\dot{B}^{\fr{d}{2}-1}_{2,1})}+\|(\Pe\dv\tau)^d\|_{\tl{L}^\infty_t(\dot{B}^{\fr{d}{2}-1}_{2,1})}
+\|u^d\|_{L^1_t(\dot{B}^{\fr{d}{2}+1}_{2,1})}+\|(\Pe\dv\tau)^d\|_{L^1_t(\dot{B}^{\fr{d}{2}+1,\fr{d}{2}-1}_{2,1})}\\
&\le&\fr{C_0}{(1-\om)^3}\left(1+\|u^d_0\|_{\dot{B}^{\fr{d}{2}-1}_{2,1}}
+\|(\Pe\dv\tau)^d_0\|_{\dot{B}^{\fr{d}{2}-1}_{2,1}}\right),
\eeq
and
\beq
\nn&&\|u^h\|_{\tl{L}^\infty_t(\dot{B}^{\fr{d}{2}-1)}_{2,1}}+\|(\Pe\dv\tau)^h\|_{\tl{L}^\infty_t(\dot{B}^{\fr{d}{2}-1}_{2,1})}
+\|u^h\|_{L^1_t(\dot{B}^{\fr{d}{2}+1}_{2,1})}+\|(\Pe\dv\tau)^h\|_{L^1_t(\dot{B}^{\fr{d}{2}+1,\fr{d}{2}-1}_{2,1})}\\
&\le&\fr{C_0}{(1-\om)^6}\left(\|u^h_0\|_{\dot{B}^{\fr{d}{2}-1}_{2,1}}+\om\|u^d_0\|_{\dot{B}^{\fr{d}{2}-1}_{2,1}}
+\|\tau_0\|_{\dot{B}^{\fr{d}{2}-1}_{2,1}}\right)\exp\left(\fr{C_0}{(1-\om)^{12}}\|u^d_0\|_{\dot{B}^{\fr{d}{2}-1}_{2,1}}
^2\right),
\eeq
and
\beq
\nn&&\|\tau\|_{\tl{L}^\infty_t(\dot{B}^{\fr{d}{2}}_{2,1})}+\|\tau\|_{L^1_t(\dot{B}^{\fr{d}{2}}_{2,1})}\\
&\le&\fr{C_0}{(1-\om)^6}\left(\|u^h_0\|_{\dot{B}^{\fr{d}{2}-1}_{2,1}}+\om\|u^d_0\|_{\dot{B}^{\fr{d}{2}-1}_{2,1}}
+\|\tau_0\|_{\dot{B}^{\fr{d}{2}-1}_{2,1}}\right)\exp\left(\fr{C_0}{(1-\om)^{12}}\|u^d_0\|_{\dot{B}^{\fr{d}{2}-1}_{2,1}}
^2\right).
\eeq
\end{thm}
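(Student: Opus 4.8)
The plan is to run a continuity/bootstrap argument based on the anisotropic energy functional $Y_q$ described above, treating the vertical component as a quantity solving a \emph{linear} equation with coefficients built from $(u^h,\tau)$, and the horizontal component together with $\tau$ as the genuinely nonlinear (but small) part. First I would set up the auxiliary system \eqref{auxiliary-liOB} for $(u,\Pe\dv\tau)$, localize it with $\ddl$, and derive the block estimates in the three frequency regimes $q\le q_1$, $q_1<q\le q_0$, $q>q_0$, using the functional $Y_q$. In each regime one checks that $Y_q$ is equivalent (up to constants depending on $\Rey,\We$ and on $(1-\om)$, but \emph{not} on $\om$ in the crucial low-frequency regime) to $\|u_q\|_{L^2}+\|\Lm^{-1}\Pe\dv\tau_q\|_{L^2}$ or $\|u_q\|_{L^2}+\|\Pe\dv\tau_q\|_{L^2}$, and that $\frac{d}{dt}Y_q^2$ produces a coercive term $\sim 2^{2q}(1-\om)Y_q^2$ (genuine parabolic smoothing on $u$) plus the contribution of the right-hand sides. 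The choice of $q_0,q_1$ is dictated by forcing the damping/coupling balance: $2^{q_1}\sim\sqrt{\om/\We}$-type and $2^{q_0}\sim 1/(1-\om)$-type thresholds, so that the low-frequency coercivity is $\om$-independent.

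Next I would split $u=u^h+u^d e_d$ and $\Pe\dv\tau=(\Pe\dv\tau)^h+(\Pe\dv\tau)^d e_d$ and write separate systems for the two pieces, using $\dv u=0$ to rewrite $u\cdot\nb u^d=u^h\cdot\nb_h u^d-u^d\dv_h u^h$, so the $u^d$-equation is linear in $u^d$ with coefficients $u^h$; the forcing of the $u^d$-system and of $(\Pe\dv\tau)^d$ is then bilinear of the form (small horizontal data)$\times(u^d,(\Pe\dv\tau)^d)$. Summing the block estimates against $2^{q(\frac d2-1)}$ and using the standard $\tl L^1_t$--$\tl L^\infty_t$ chain rule inequality (Danchin-type) gives, for the vertical part, a Gronwall inequality of the schematic form
\[
X^d(t)\le C_0(1-\om)^{-3}\bigl(1+\|u^d_0\|+\|(\Pe\dv\tau)^d_0\|\bigr)+C_0(1-\om)^{-?}\,X^h(t)\,X^d(t),
\]
and for the horizontal part (and $\tau$), using that $(\Pe\dv\tau)_0^d$ and $\tau_0$ are controlled by $\|\tau_0\|_{\dot B^{d/2-1}_{2,1}}$ up to $(1-\om)$-powers,
\[
X^h(t)\le C_0(1-\om)^{-6}\bigl(\|u^h_0\|+\om\|u^d_0\|+\|\tau_0\|\bigr)+C_0(1-\om)^{-?}\bigl(X^h(t)+X^d(t)\bigr)X^h(t).
\]
Here $X^h,X^d$ denote the left-hand sides appearing in the three displayed estimates of Theorem \ref{thm-g}, and the product estimates for the nonlinear terms $\Pe(u\cdot\nb)u$, $\Pe\dv(u\cdot\nb)\tau$, $\Pe\dv g_\al(\tau,\nb u)$, $u\cdot\nb\tau$, $g_\al(\tau,\nb u)$ are handled by Bony paraproduct decomposition and the algebra/interpolation properties of $\dot B^{d/2}_{2,1}$ and the hybrid space $\dot B^{d/2+1,d/2-1}_{2,1}$. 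The key algebraic point is that nowhere does a factor $\om^{-1}$ appear multiplying a nonlinear term: the linear coupling $2\om D(u)$ in $\eqref{IOBdimensionless}_2$ is absorbed precisely because the $q\le q_1$ part of $Y_q$ gives $\om$-free parabolic gain, while for $q>q_1$ the term $\om D(u)$ is small relative to the damping/Bernstein factors.

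The bootstrap closes as follows: feed the (large but finite) bound for $X^d$ coming from Gronwall — which produces the $\exp(C_0(1-\om)^{-12}\|u^d_0\|^2)$ factor, because the coefficient of $X^d$ in its own Gronwall loop is controlled by $\|u^h\|\sim X^h$ which in turn is linear in $\|u^d_0\|$ and must be integrated, giving a quadratic exponent — into the $X^h$ inequality; the smallness hypothesis \eqref{data} is exactly what is needed so that $C_0(1-\om)^{-?}(X^h+X^d)<\frac12$ on the maximal interval, hence $X^h$ stays below its claimed bound and then $X^d$ below its. A standard continuation argument upgrades the a priori bounds to a global solution, and uniqueness follows from an energy estimate for the difference of two solutions in a space with one derivative less (the usual logarithmic-type or direct $\dot B^{d/2-1}_{2,1}$ argument), which closes because the bounds above put the solution in $L^1_t(\dot B^{d/2+1}_{2,1})$. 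I expect the main obstacle to be the third step — extracting from the coupled Gronwall system the precise $(1-\om)$-power and the quadratic exponential in $\|u^d_0\|$ while making absolutely sure no $\om^{-1}$ sneaks in through the intermediate-frequency regime $q_1<q\le q_0$; this is exactly where the substitution of $\Lm^{-1}\Pe\dv\tau$ for $\Pe\dv\tau$ (rather than the functional $\tl Y_q$ of \cite{Zi14}) is essential, and verifying the block estimates there is the delicate part.
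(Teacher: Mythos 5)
Your proposal follows essentially the paper's route: the three-regime block estimates for $Y_q$ (low $q\le q_1$, intermediate $q_1<q\le q_0$ via $\Lm^{-1}\Pe\dv\tau$, high $q>q_0$), the anisotropic decomposition exploiting $\dv u=0$ to make the $u^d$-equation linear in $u^d$ with $u^h$-coefficients, the coupled Gronwall bootstrap for $A^h,A^d,B$ (Propositions~\ref{P1}--\ref{P2}), and a continuation argument combined with the blow-up criterion of Chemin--Masmoudi. One factual correction worth flagging: in the paper the low/intermediate threshold is $q_1=\bigl[\log_2\bigl(\tfrac{3}{32}\tfrac{\sqrt{\We\Rey}}{\Rey+\We}\bigr)\bigr]$, which is deliberately $\om$-\emph{independent} (only $q_0\sim\log\tfrac{1}{1-\om}$ carries $\om$); taking $2^{q_1}\sim\sqrt{\om/\We}$ as you suggest would make the parabolic low-frequency window collapse as $\om\to0$, contradicting the very $\om$-uniform coercivity you invoke, and the coercive gain at low frequencies is $\sim 2^{2q}$ with no $(1-\om)$ factor -- the $(1-\om)$-losses enter only through the high-frequency regimes and the final recombination. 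Also, the paper does not reprove uniqueness via a fresh difference estimate in a weaker space; it simply inherits local existence and uniqueness from Chemin--Masmoudi and uses the a priori bounds to show $T^\star=\infty$.
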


The rest part of this paper is organized as follows. In Section 2, we introduce the tools ( the Littlewood-Paley decomposition
and paradifferertial calculus) and give some nonlinear estimates in Besov spaces.  Section 3 is devoted to the global a priori estimates of system \eqref{IOBdimensionless}. The proof of Theorem \ref{thm-g} is given in Section 4.

\bigbreak\noindent{\bf Notation.}\par
\begin{enumerate}
\item For $f\in \mathcal{S}'$, $f_q:=\ddl f$.
\item For $x=(x_1,\cdots,x_d)$, we denote $x_h:=(x_1,\cdots,x_{d-1})$; For vector field $u(x)=(u^1(x),  \cdots, u^d(x)),$ we denote $u^h(x):=(u^1(x),  \cdots, u^{d-1}(x))$, $\pr_hu(x):=(\pr_1u(x),  \cdots, \pr_{d-1}u(x))^\top$, and $\dv_hu^h(x):=\pr_1u^1(x)+\cdots+ \pr_{d-1}u^{d-1}(x)$.
\item For $a, b\in L^2$, $(a|b)$ denotes the $L^2$ inner product of $a$ and $b$.

\item Throughout the paper, $C$ denotes various ``harmless'' positive constants, and
we sometimes use the notation $A \lesssim B$ as an equivalent to $A \le CB$. The
notation $A \approx B$ means that $A \lesssim B$ and $B \lesssim A$.
\end{enumerate}
\section{The Functional Tool Box}
\noindent The results of the present paper rely on the use of  a
dyadic partition of unity with respect to the Fourier variables, the so-called the
\textit{Littlewood-Paley  decomposition}. Let us briefly explain how
it may be built in the case $x\in \R^d$ which the readers may see more details
in \cite{Bahouri-Chemin-Danchin11,Ch1}. Let $(\chi, \varphi)$ be a couple of $C^\infty$ functions satisfying
\be\label{support}
\hbox{Supp}\chi\subset\{|\xi|\leq\frac{4}{3}\},
\quad
\hbox{Supp}\varphi\subset\{\frac{3}{4}\leq|\xi|\leq\frac{8}{3}\},
\ee
and
$$\chi(\xi)+\sum_{q\geq0}\varphi(2^{-q}\xi)=1,$$

$$\sum_{q\in \mathbb{Z}}\varphi(2^{-q}\xi)=1, \quad \textrm{for} \quad \xi\neq0.$$
Set $\varphi_q(\xi)=\varphi(2^{-q}\xi),$
$h_q=\mathcal{F}^{-1}(\varphi_q),$ and
$\tilde{h}=\mathcal{F}^{-1}(\chi)$. The dyadic blocks and the low-frequency cutoff operators are defined for all $q\in\mathbb{Z}$ by
$$\dot{\Delta}_{q}u=\varphi(2^{-q}\mathrm{D})u=\int_{\R^d}h_q(y)u(x-y)dy,$$
$$\dot{S}_qu=\chi(2^{-q}\mathrm{D})u=\int_{\R^d}\tl{h}_q(y)u(x-y)dy.$$
Then
\begin{equation}\label{e2.1}
u=\sum_{q\in \mathbb{Z}}\Delta_qu,
\end{equation}
holds for tempered distributions {\em modulo polynomials}. As working modulo polynomials is not appropriate for nonlinear problems, we
shall restrict our attention to the set $\mathcal {S}'_h$ of tempered distributions $u$ such that
$$
\lim_{q\rightarrow-\infty}\|\dot{S}_qu\|_{L^\infty}=0.
$$
Note that \eqref{e2.1} holds true whenever $u$ is in $\mathcal{S}'_h$ and that one may write
$$
\dot{S}_qu=\sum_{p\leq q-1}\dot{\Dl}_{p}u.
$$
Besides, we would like to mention that the Littlewood-Paley decomposition
has a nice property of quasi-orthogonality:
\begin{equation}\label{e2.2}
\dot{\Delta}_p\dot{\Delta}_qu\equiv 0\ \ \hbox{if}\ \ \ |p-q|\geq 2\ \
\hbox{and}\ \ \dot{\Delta}_p(\dot{S}_{q-1}u\dot{\Delta}_qu)\equiv 0\ \ \hbox{if}\ \ \
|p-q|\geq 5.
\end{equation}
One can now  give the definition of
homogeneous Besov spaces.
\begin{defn}\label{D2.1}
For $s\in\R$, $(p,r)\in[1,\infty]^2$, and
$u\in\mathcal{S}'(\R^d),$ we set
$$\|u\|_{\dot{B}_{p,r}^s}=\left\|2^{ sq}\|\dot{\Delta}_qu\|_{L^p} \right\|_{\ell^r}.$$
We then define the space
$\dot{B}_{p,r}^s:=\{u\in\mathcal{S}'_h(\R^d),\
\|u\|_{\dot{B}_{p,r}^s}<\infty\}$.
\end{defn}
Since homogeneous Besov spaces fail to have nice inclusion properties, it is wise to define {\em hybrid Besov spaces} where the growth conditions satisfied by the dyadic blocks are different for low and high frequencies. In fact,  hybrid Besov spaces played a crucial role for proving global well-posedness of compressible barotropic Navier-Stokes equations in critical spaces \cite{Danchin00}. Let us now define the hybrid Besov spaces that we need. Here our notations are somehow different from those in \cite{Danchin00}.
\begin{defn}\label{def-hybrid}
Let $s, t\in\R$, and
$u\in\mathcal{S}'(\R^d)$. For some fixed $q_0\in\Z$, we set
\beno
\|u\|_{\dot{B}^{s,t}_{2,1}}:=\sum_{q\le q_0}2^{qs}\|\ddl u\|_{L^2}+\sum_{q>q_0}2^{qt}\|\ddl u\|_{L^2}.
\eeno
We then define the space
$\dot{B}_{2,1}^{s,t}:=\{u\in\mathcal{S}'_h(\R^d),\
\|u\|_{\dot{B}_{2,1}^{s,t}}<\infty\}$.
\end{defn}
\begin{rem}
For all $s, t\in\R, \tl{q}_0\in\Z$, and
$u\in\mathcal{S}'(\R^d)$, setting
\beno
\|u\|_{\tl{B}^{s,t}_{2,1}}:=\sum_{q\le \tl{q}_0}2^{qs}\|\ddl u\|_{L^2}+\sum_{q>\tl{q}_0}2^{qt}\|\ddl u\|_{L^2},
\eeno
then it is easy to verify that $\|u\|_{\tl{B}^{s,t}_{2,1}}\approx\|u\|_{\dot{B}^{s,t}_{2,1}}$.
\end{rem}
The following lemma describes the way derivatives act on spectrally localized functions.
\begin{lem}[Bernstein's inequalities]\label{Bernstein}
Let $k\in\N$ and $0<r<R$. There exists a constant $C$ depending on $r, R$ and $d$ such that for all $(a,b)\in[1,\infty]^2$, we have for all $\lm>0$ and multi-index $\al$
\begin{itemize}
\item If $\mathrm{Supp} \hat{f}\subset B(0,\lm R)$, then $\sup_{\al=k}\|\pr^\al f\|_{L^b}\le C^{k+1}\lm^{k+d(\fr1a-\fr1b)}\|f\|_{L^a}$.
\item If $\mathrm{Supp} \hat{f}\subset \mathcal{C}(0,\lm r, \lm R)$, then $C^{-k-1}\lm^k\|f\|_{L^a}\le\sup_{|\al|=k}\|\pr^\al f\|_{L^a}\le C^{k+1}\lm^k\|f\|_{L^a}$
\end{itemize}
\end{lem}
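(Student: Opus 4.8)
\emph{Overview and reduction.} The plan is to prove both estimates by the standard argument: first reduce to $\lambda=1$ by a dilation, then represent a spectrally localized function as the convolution of a fixed smooth kernel with $f$ and invoke Young's convolution inequality — the only real work being to check that the resulting constant grows no faster than $C^{k+1}$ in the order $k$ of differentiation. (In the first, ball, inequality it is understood that $a\le b$, i.e. $\frac1a-\frac1b\ge0$, as is forced by the stated power of $\lambda$.) For the reduction, given $\lambda>0$ and $f$ as in either hypothesis, I would set $g(x):=f(\lambda^{-1}x)$, so that $\widehat g(\xi)=\lambda^d\widehat f(\lambda\xi)$ has its spectrum in $B(0,R)$ (resp. in $\mathcal C(0,r,R)$), while $\|g\|_{L^p}=\lambda^{d/p}\|f\|_{L^p}$ and $\|\partial^\alpha g\|_{L^p}=\lambda^{d/p-|\alpha|}\|\partial^\alpha f\|_{L^p}$; hence it suffices to treat $\lambda=1$, and substituting these dilation identities back reproduces exactly the exponents $\lambda^{k+d(1/a-1/b)}$ and $\lambda^{\pm k}$ claimed.

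\emph{The ball estimate for $\lambda=1$.} I would fix once and for all a $\phi\in C_c^\infty(\R^d)$ with $\phi\equiv1$ on $B(0,R)$ and $\mathrm{Supp}\,\phi\subset B(0,2R)$. Then $\widehat f=\phi\,\widehat f$, so $\partial^\alpha f=\mathcal F^{-1}\big((i\xi)^\alpha\phi\big)*f$, and Young's inequality gives, for $\tfrac1c:=1-\big(\tfrac1a-\tfrac1b\big)\in[0,1]$,
\[
\|\partial^\alpha f\|_{L^b}\ \le\ \big\|\mathcal F^{-1}\big((i\xi)^\alpha\phi\big)\big\|_{L^c}\,\|f\|_{L^a}.
\]
It then remains to bound $\big\|\mathcal F^{-1}((i\xi)^\alpha\phi)\big\|_{L^c}$ by $C^{k+1}$ uniformly in $c$ and in $|\alpha|=k$, with $C=C(R,d)$. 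For the $L^\infty$-norm one uses $|\xi^\alpha|\le|\xi|^k\le(2R)^k$ on $\mathrm{Supp}\,\phi$; for the $L^1$-norm one writes $(1+|x|^2)^d\,\mathcal F^{-1}((i\xi)^\alpha\phi)(x)=\mathcal F^{-1}\big((1-\Delta_\xi)^d((i\xi)^\alpha\phi)\big)(x)$ and estimates $\|(1-\Delta_\xi)^d((i\xi)^\alpha\phi)\|_{L^1}$ by Leibniz' rule, noting that each $\partial^\gamma(\xi^\alpha)$ with $|\gamma|\le2d$ is a monomial of degree $k-|\gamma|$ whose coefficient is at most $k^{2d}$. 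Both norms are thus $\le C(R,d)\,k^{2d}(2R)^k$, and since a polynomial in $k$ times a geometric factor is $\le C^{k+1}$ for a suitable constant, the interpolation bound $\|h\|_{L^c}\le\|h\|_{L^\infty}^{1-1/c}\|h\|_{L^1}^{1/c}$ finishes the first inequality.

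\emph{The annulus estimate for $\lambda=1$.} The upper bound here is just the ball estimate with $b=a$, since $\mathcal C(0,r,R)\subset B(0,R)$. For the lower bound I would fix $\widetilde\phi\in C_c^\infty(\R^d\setminus\{0\})$ equal to $1$ on $\{r\le|\xi|\le R\}$, and exploit the multinomial identity $|\xi|^{2k}=\sum_{|\alpha|=k}\tfrac{k!}{\alpha!}\xi^{2\alpha}$ to write, on $\mathrm{Supp}\,\widehat f$,
\[
\widehat f\ =\ \sum_{|\alpha|=k}\tfrac{k!}{\alpha!}(-i)^k\,\theta_\alpha\,\widehat{\partial^\alpha f},\qquad \theta_\alpha(\xi):=|\xi|^{-2k}\xi^\alpha\,\widetilde\phi(\xi)\in C_c^\infty(\R^d\setminus\{0\}).
\]
Young's inequality then yields $\|f\|_{L^a}\le\big(\sum_{|\alpha|=k}\tfrac{k!}{\alpha!}\|\mathcal F^{-1}\theta_\alpha\|_{L^1}\big)\sup_{|\alpha|=k}\|\partial^\alpha f\|_{L^a}$. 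Running the same $(1-\Delta_\xi)^d$/Leibniz computation — now also using that $\partial^\beta(|\xi|^{-2k})$ is homogeneous of degree $-2k-|\beta|$ with polynomial-in-$k$ coefficients, hence $\le Ck^{|\beta|}r^{-2k-|\beta|}$ on $\mathrm{Supp}\,\widetilde\phi$ — gives $\|\mathcal F^{-1}\theta_\alpha\|_{L^1}\le C(r,R,d)\,k^{2d}r^{-k}$; combined with $\sum_{|\alpha|=k}\tfrac{k!}{\alpha!}=d^k$ and $\#\{\alpha:|\alpha|=k\}\le C_d k^{d-1}$, the bracket is at most $C(r,R,d)\,k^{3d-1}(d/r)^k\le C^{k+1}$, which is the desired lower bound.

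\emph{Main obstacle.} None of the steps is conceptually hard; the one point requiring care is the combinatorial bookkeeping in the last two steps — one must check that after differentiating the symbols $(i\xi)^\alpha\phi$ and $|\xi|^{-2k}\xi^\alpha\widetilde\phi$ up to the fixed order $2d$, every factor that appears (the coefficients $\alpha!/(\alpha-\gamma)!$, the derivatives of $|\xi|^{-2k}$, the number of multi-indices of length $k$, and the multinomial sum $d^k$) is at worst of the form (polynomial in $k$)$\times$(constant)$^k$, so that all of them can be absorbed into a single constant raised to the power $k+1$ with that constant depending only on $r$, $R$, and $d$. Everything else is a routine use of Young's inequality and the two dilation identities.
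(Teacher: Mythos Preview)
The paper does not supply a proof of this lemma: Bernstein's inequalities are listed in Section~2 as part of the standard ``functional tool box'' and are simply quoted, the implicit reference being the textbook \cite{Bahouri-Chemin-Danchin11}. Your argument is correct and is essentially the classical proof that appears there (dilation to $\lambda=1$, convolution against $\mathcal F^{-1}$ of a compactly supported symbol, Young's inequality, and the $(1-\Delta_\xi)^d$ integration-by-parts trick to control the $L^1$ norm of the kernel; the multinomial identity for the lower bound on the annulus). Two very minor points: in the annulus lower bound the cardinality estimate $\#\{\alpha:|\alpha|=k\}\le C_dk^{d-1}$ is superfluous since you already sum the multinomial weights to $d^k$; and in the $L^\infty$ bound on $\mathcal F^{-1}((i\xi)^\alpha\phi)$ you really want $\|\xi^\alpha\phi\|_{L^1}$ rather than just the pointwise bound $(2R)^k$, but of course $\|\xi^\alpha\phi\|_{L^1}\le(2R)^k\|\phi\|_{L^1}$ gives exactly that. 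Neither affects the validity of the argument.
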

\noindent In fact,  the following special case of Lemma \ref{Bernstein} will be used very frequently in this paper.
\begin{coro}\label{coro-Bernstein}
Let $\Lm:=\sqrt{-\Dl}$, then for any $f\in\mathcal {S}'$, there holds
\be\label{equivalent}
\fr342^q\|\ddl f\|_{L^2}\le\|\Lm \ddl f\|_{L^2}\le\fr832^q\|\ddl f\|_{L^2}.
\ee
\end{coro}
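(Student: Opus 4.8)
The plan is to prove \eqref{equivalent} directly via Plancherel's theorem, exploiting the explicit annular support encoded in \eqref{support}. For any $f\in\mathcal{S}'$ the dyadic block $\ddl f=\varphi(2^{-q}\mathrm{D})f=h_q*f$ is a well-defined tempered distribution whose Fourier transform is $\varphi(2^{-q}\xi)\hat f(\xi)$; since $\mathrm{Supp}\,\varphi\subset\{\fr34\le|\xi|\le\fr83\}$, the function $\widehat{\ddl f}$ is supported in the annulus $\mathcal{C}_q:=\{\xi\in\R^d:\fr34 2^q\le|\xi|\le\fr83 2^q\}$. As $\Lm=\sqrt{-\Dl}$ acts on the Fourier side as multiplication by $|\xi|$, we have $\widehat{\Lm\ddl f}(\xi)=|\xi|\,\widehat{\ddl f}(\xi)$, again supported in $\mathcal{C}_q$.

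First I would dispose of the trivial case: if $\|\ddl f\|_{L^2}=\infty$ then $\widehat{\ddl f}\notin L^2$, hence $|\xi|\,\widehat{\ddl f}\notin L^2$ on the bounded annulus $\mathcal{C}_q$, so both sides of \eqref{equivalent} are infinite and there is nothing to prove. Otherwise $\ddl f\in L^2$, and Plancherel's identity gives, up to a fixed dimensional normalization constant $c_d$,
\[
\|\Lm\ddl f\|_{L^2}^2=c_d\int_{\mathcal{C}_q}|\xi|^2\,\bigl|\widehat{\ddl f}(\xi)\bigr|^2\,d\xi,\qquad \|\ddl f\|_{L^2}^2=c_d\int_{\mathcal{C}_q}\bigl|\widehat{\ddl f}(\xi)\bigr|^2\,d\xi.
\]
On $\mathcal{C}_q$ one has the pointwise bounds $\bigl(\fr34 2^q\bigr)^2\le|\xi|^2\le\bigl(\fr83 2^q\bigr)^2$; inserting these into the first integral and comparing with the second yields
\[
\Bigl(\fr34 2^q\Bigr)^2\|\ddl f\|_{L^2}^2\le\|\Lm\ddl f\|_{L^2}^2\le\Bigl(\fr83 2^q\Bigr)^2\|\ddl f\|_{L^2}^2,
\]
and taking square roots gives exactly \eqref{equivalent}.

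There is essentially no obstacle here; the statement is the $k=1$, $a=2$ instance of the second bullet in Lemma \ref{Bernstein}, the only point being that the sharp constants $\fr34$ and $\fr83$ (rather than a generic $C$) come from reading off the precise numerology of $\mathrm{Supp}\,\varphi$ in \eqref{support}. The one detail I would keep clean is the legitimacy of the Fourier-multiplier manipulations for $f\in\mathcal{S}'$, which is immediate because $\ddl f$ has Fourier transform compactly supported in $\mathcal{C}_q$, so it is in particular a smooth function of polynomial growth and Plancherel applies verbatim on the slice where $\ddl f\in L^2$.
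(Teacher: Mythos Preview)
Your proof is correct and follows exactly the approach indicated in the paper, which simply notes that \eqref{equivalent} follows from the support property \eqref{support} of $\varphi$ together with Plancherel's theorem. You have merely supplied the (entirely routine) details: the Fourier support of $\ddl f$ lies in the annulus $\{\fr34\,2^q\le|\xi|\le\fr83\,2^q\}$, whence the pointwise bounds on $|\xi|$ transfer to the $L^2$ norms via Plancherel.
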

\noindent Owing to the property \eqref{support} of  $\mathrm{Supp}\varphi$, \eqref{equivalent} can be easily obtained by virtue of Plancherel's Theorem.

Let us now state some classical
properties for the Besov spaces.
\begin{prop}\label{prop-classical}
For all $s, s_1, s_2\in\R$, $1\le p, p_1, p_2, r, r_1, r_2\le\infty$, the following properties hold true:

\begin{itemize}
\item If $p_1\leq p_2 $  and $r_1\leq r_2,$ then
$\dot{B}_{p_1,r_1}^{s}\hookrightarrow
\dot{B}_{p_2,r_2}^{s-\frac{d}{p_1}+\frac{d}{p_2}}$.

\item If $s_1\neq s_2$ and $\theta\in(0,1)$,
$\left[\dot{B}_{p,r_1}^{s_1},\dot{B}_{p,r_2}^{s_2}\right]_{(\theta,r)}=\dot{B}_{p,r}^{\theta
s_1+(1-\theta)s_2}.$

\item For any smooth homogeneous of degree $m\in\Z$ function $F$ on $\R^d\backslash\{0\}$, the operator $F(D)$ maps $\dot{B}^{s}_{p,r}$ in $\dot{B}^{s-m}_{p,r}$.
\end{itemize}
\end{prop}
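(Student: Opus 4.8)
\emph{Proof proposal.} I would establish the three assertions separately, all as consequences of Bernstein's inequality (Lemma~\ref{Bernstein}), Young's convolution inequality, and elementary properties of $\ell^r$-sequences, following the presentation of \cite{Bahouri-Chemin-Danchin11}. Throughout I would fix once and for all a function $\tl\varphi\in C^\infty_c(\R^d)$, supported in an annulus slightly larger than $\mathrm{Supp}\,\varphi$ and identically equal to $1$ on $\mathrm{Supp}\,\varphi$, so that $\tl\varphi(2^{-q}D)\ddl=\ddl$ for every $q\in\Z$; I would also use repeatedly that $\ddl$ commutes with every Fourier multiplier, in particular with $F(D)$.

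\emph{Embedding and multiplier bullets.} For the first bullet I would start from the fact that $\widehat{\ddl u}$ is supported in $\{\fr34 2^q\le|\xi|\le\fr83 2^q\}$, so the first part of Lemma~\ref{Bernstein} gives $\|\ddl u\|_{L^{p_2}}\les 2^{qd(\fr1{p_1}-\fr1{p_2})}\|\ddl u\|_{L^{p_1}}$; multiplying by $2^{q(s-\fr d{p_1}+\fr d{p_2})}$, taking the $\ell^{r_1}$-norm in $q$, and then using the inclusion $\ell^{r_1}\hookrightarrow\ell^{r_2}$ (valid since $r_1\le r_2$) yields $\|u\|_{\dot B^{s-d/p_1+d/p_2}_{p_2,r_2}}\les\|u\|_{\dot B^s_{p_1,r_1}}$, and $u\in\mathcal S'_h$ is preserved by the same Bernstein bound applied to the low-frequency cutoffs. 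For the third bullet I would exploit homogeneity: writing $\xi=2^q\eta$ gives $F(\xi)\tl\varphi(2^{-q}\xi)=2^{qm}(F\tl\varphi)(2^{-q}\xi)$, and since $F\tl\varphi\in C^\infty_c(\R^d\setminus\{0\})$ its inverse Fourier transform $g:=\mathcal F^{-1}(F\tl\varphi)$ lies in $\mathcal S(\R^d)\subset L^1$. Hence $F(D)\ddl u=F(D)\tl\varphi(2^{-q}D)\ddl u=2^{qm}\bigl(2^{qd}g(2^q\cdot)\bigr)\ast\ddl u$, and Young's inequality gives $\|F(D)\ddl u\|_{L^p}\le 2^{qm}\|g\|_{L^1}\|\ddl u\|_{L^p}$; using $\ddl F(D)u=F(D)\ddl u$, multiplying by $2^{q(s-m)}$ and taking $\ell^r$-norms yields $\|F(D)u\|_{\dot B^{s-m}_{p,r}}\les\|g\|_{L^1}\|u\|_{\dot B^s_{p,r}}$.

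\emph{Interpolation bullet.} I would argue by retraction. Set $\mathcal R u:=(\ddl u)_{q\in\Z}$ and $\mathcal R^\ast\bigl((f_q)_q\bigr):=\sum_{q\in\Z}\tl\varphi(2^{-q}D)f_q$, and for $s\in\R$, $1\le r\le\infty$ let $\ell^r_s(L^p)$ be the space of sequences with $\bigl\|2^{qs}\|f_q\|_{L^p}\bigr\|_{\ell^r(\Z)}<\infty$. Each $\tl\varphi(2^{-q}D)f_q$ is spectrally supported in an annulus around $2^q$, so $\ddl\mathcal R^\ast(f)$ is a finite sum $\sum_{|p-q|\le N_0}\ddl\tl\varphi(2^{-p}D)f_p$, from which one gets boundedness of $\mathcal R^\ast:\ell^r_s(L^p)\to\dot B^s_{p,r}$; boundedness of $\mathcal R:\dot B^s_{p,r}\to\ell^r_s(L^p)$ is the definition, and $\mathcal R^\ast\mathcal R=\mathrm{Id}$ on $\dot B^s_{p,r}$ because $\tl\varphi(2^{-q}D)\ddl u=\ddl u$ together with \eqref{e2.1} in $\mathcal S'_h$. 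By the abstract interpolation theorem for a retraction/co-retraction pair, $[\dot B^{s_1}_{p,r_1},\dot B^{s_2}_{p,r_2}]_{(\theta,r)}=\mathcal R^\ast\bigl([\ell^{r_1}_{s_1}(L^p),\ell^{r_2}_{s_2}(L^p)]_{(\theta,r)}\bigr)$, so it remains to identify the interpolation space of the weighted sequence spaces, and a direct $K$-functional computation gives $[\ell^{r_1}_{s_1}(L^p),\ell^{r_2}_{s_2}(L^p)]_{(\theta,r)}=\ell^r_{s_\theta}(L^p)$ with $s_\theta=\theta s_1+(1-\theta)s_2$; applying $\mathcal R^\ast$ then gives $\dot B^{s_\theta}_{p,r}$.

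\emph{Main obstacle.} The embedding and multiplier parts are essentially one-line consequences of Bernstein and Young; the genuinely delicate point is the interpolation identity. The care is needed in two places: first, in proving boundedness of $\mathcal R^\ast$, where the quasi-orthogonality \eqref{e2.2} and the restriction to $\mathcal S'_h$ (that is, working modulo polynomials in the homogeneous setting) are what make the series $\sum_q\tl\varphi(2^{-q}D)f_q$ converge in the correct sense; and second, in the $K$-functional computation for weighted $\ell^r$-spaces, where the hypothesis $s_1\neq s_2$ is essential --- it is precisely what forces the weight ratio $2^{q(s_1-s_2)}$ to range over all of $(0,\infty)$ and thereby pins down the interpolation space. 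Both steps are classical and can be quoted from \cite{Bahouri-Chemin-Danchin11} if a self-contained argument is not desired.
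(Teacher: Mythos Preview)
Your proposal is correct and follows the standard arguments from \cite{Bahouri-Chemin-Danchin11}. Note, however, that the paper does not actually prove this proposition: it is stated without proof as a list of ``classical properties for the Besov spaces,'' with the implicit reference being the monograph \cite{Bahouri-Chemin-Danchin11} cited at the beginning of Section~2. So there is nothing to compare against --- you have supplied a self-contained sketch where the paper simply quotes the result.
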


Next we  recall a few nonlinear estimates in Besov spaces which may be
obtained by means of paradifferential calculus. Firstly introduced
 by J. M. Bony in \cite{Bony81}, the paraproduct between $f$
and $g$ is defined by
$$\dot{T}_fg=\sum_{q\in\mathbb{Z}}\dot{S}_{q-1}f\dot{\Delta}_qg,$$
and the remainder is given by
$$\dot{R}(f,g)=\sum_{q\geq -1}\dot{\Delta}_qf\tilde{\dot{\Delta}}_qg$$
with
$$\tilde{\dot{\Delta}}_qg=(\dot{\Delta}_{q-1}+\dot{\Delta}_{q}+\dot{\Delta}_{q+1})g.$$
We have the following so-called Bony's decomposition:
 \be\label{Bony-decom}
fg=\dot{T}_fg+\dot{T}_gf+\dot{R}(f,g)=\dot{T}_fg+\dot{T}'_gf,
 \ee
where $\dot{T}'_gf:=\dot{T}_gf+\dot{R}(f,g)$. The paraproduct $\dot{T}$ and the remainder $\dot{R}$ operators satisfy the following
continuous properties.

\begin{prop}\label{p-TR}
For all $s\in\mathbb{R}$, $\sigma>0$, and $1\leq p, p_1, p_2, r, r_1, r_2\leq\infty,$ the
paraproduct $\dot T$ is a bilinear, continuous operator from $L^{\infty}\times \dot{B}_{p,r}^s$ to $
\dot{B}_{p,r}^{s}$ and from $\dot{B}_{\infty,r_1}^{-\sigma}\times \dot{B}_{p,r_2}^s$ to
$\dot{B}_{p,r}^{s-\sigma}$  with $\frac{1}{r}=\min\{1, \frac{1}{r_1}+\frac{1}{r_2}\}$. The remainder $\dot R$ is bilinear continuous from
$\dot{B}_{p_1,r_1}^{s_1}\times \dot{B}_{p_2,r_2}^{s_2}$ to $
\dot{B}_{p,r}^{s_1+s_2}$ with
$s_1+s_2>0$,  $\frac{1}{p}=\frac{1}{p_1}+\frac{1}{p_2}\leq1$, and $\frac{1}{r}=\frac{1}{r_1}+\frac{1}{r_2}\leq1$.
\end{prop}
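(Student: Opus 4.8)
The plan is to prove the three continuity assertions for $\dot T$ and $\dot R$ by combining Bernstein's inequalities (Lemma~\ref{Bernstein}), H\"older's inequality, and the quasi-orthogonality property \eqref{e2.2}, treating each claim separately.

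First I would treat the paraproduct bound from $L^\infty\times\dot B^s_{p,r}$ to $\dot B^s_{p,r}$. Fix a dyadic block $\dot\Delta_p(\dot T_fg)=\sum_{q}\dot\Delta_p(\dot S_{q-1}f\,\dot\Delta_q g)$, and observe by \eqref{e2.2} that only the terms with $|p-q|\le 4$ contribute, so the sum has a bounded number of terms. For each such term, H\"older gives $\|\dot\Delta_p(\dot S_{q-1}f\,\dot\Delta_q g)\|_{L^p}\le C\|\dot S_{q-1}f\|_{L^\infty}\|\dot\Delta_q g\|_{L^p}\le C\|f\|_{L^\infty}\|\dot\Delta_q g\|_{L^p}$, using that $\dot S_{q-1}$ is bounded on $L^\infty$ (Young's inequality with the $L^1$-normalized kernel). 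Multiplying by $2^{ps}$ and using $2^{ps}\approx 2^{qs}$ when $|p-q|\le4$, I then take the $\ell^r$ norm in $p$; since each $p$ picks up only finitely many neighbouring $q$'s, Young's inequality for series (convolution with a finitely-supported sequence) yields $\|\dot T_fg\|_{\dot B^s_{p,r}}\le C\|f\|_{L^\infty}\|g\|_{\dot B^s_{p,r}}$. For the second paraproduct bound, from $\dot B^{-\sigma}_{\infty,r_1}\times\dot B^s_{p,r_2}$ to $\dot B^{s-\sigma}_{p,r}$, the key extra ingredient is to estimate $\|\dot S_{q-1}f\|_{L^\infty}$: writing $\dot S_{q-1}f=\sum_{p'\le q-2}\dot\Delta_{p'}f$ and using $\|\dot\Delta_{p'}f\|_{L^\infty}=2^{\sigma p'}\cdot 2^{-\sigma p'}\|\dot\Delta_{p'}f\|_{L^\infty}$, summing the geometric-type series in $p'\le q-2$ (here $\sigma>0$ is exactly what makes $\sum_{p'\le q-2}2^{\sigma p'}$ converge) gives $\|\dot S_{q-1}f\|_{L^\infty}\le C\,c_q\,2^{\sigma q}\|f\|_{\dot B^{-\sigma}_{\infty,r_1}}$ for a sequence $(c_q)\in\ell^{r_1}$ of unit norm. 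Plugging this in, the same localization-plus-Young argument produces $2^{p(s-\sigma)}\|\dot\Delta_p(\dot T_fg)\|_{L^p}\le C\sum_{|p-q|\le4}c_q\,d_q\,\|f\|_{\dot B^{-\sigma}_{\infty,r_1}}\|g\|_{\dot B^s_{p,r_2}}$ with $(d_q)$ the unit-norm $\ell^{r_2}$ sequence from $g$; taking $\ell^r$ norms and using the discrete H\"older/convolution inequality with $\frac1r=\min\{1,\frac1{r_1}+\frac1{r_2}\}$ closes the estimate.

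For the remainder, $\dot R(f,g)=\sum_{q}\dot\Delta_q f\,\widetilde{\dot\Delta}_q g$, the spectral support of each summand $\dot\Delta_q f\,\widetilde{\dot\Delta}_q g$ lies in a ball of radius $\sim 2^q$, so $\dot\Delta_p(\dot\Delta_q f\,\widetilde{\dot\Delta}_q g)$ vanishes unless $q\gtrsim p$ — more precisely unless $2^q\gtrsim 2^p$, i.e.\ $q\ge p-N_0$ for a fixed $N_0$. Hence $\dot\Delta_p\dot R(f,g)=\dot\Delta_p\sum_{q\ge p-N_0}\dot\Delta_q f\,\widetilde{\dot\Delta}_q g$. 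I would estimate each term by H\"older with the split $\frac1p=\frac1{p_1}+\frac1{p_2}$: $\|\dot\Delta_q f\,\widetilde{\dot\Delta}_q g\|_{L^p}\le\|\dot\Delta_q f\|_{L^{p_1}}\|\widetilde{\dot\Delta}_q g\|_{L^{p_2}}$, then use boundedness of $\dot\Delta_p$ on $L^p$ to pass to $\|\dot\Delta_p(\cdots)\|_{L^p}$. Writing $\|\dot\Delta_q f\|_{L^{p_1}}=a_q 2^{-s_1 q}\|f\|_{\dot B^{s_1}_{p_1,r_1}}$ and $\|\widetilde{\dot\Delta}_q g\|_{L^{p_2}}\le b_q 2^{-s_2 q}\|g\|_{\dot B^{s_2}_{p_2,r_2}}$ with $(a_q)\in\ell^{r_1},(b_q)\in\ell^{r_2}$ of unit norm, I get $2^{p(s_1+s_2)}\|\dot\Delta_p\dot R(f,g)\|_{L^p}\le C\|f\|_{\dot B^{s_1}_{p_1,r_1}}\|g\|_{\dot B^{s_2}_{p_2,r_2}}\sum_{q\ge p-N_0}2^{(s_1+s_2)(p-q)}a_q b_q$. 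Since $s_1+s_2>0$, the geometric weight $2^{(s_1+s_2)(p-q)}$ is summable over $q\ge p-N_0$ and defines a (one-sided) convolution kernel in $\ell^1$; taking $\ell^r$ norm in $p$ and applying Young's inequality for series together with H\"older on $(a_q b_q)$ with $\frac1r=\frac1{r_1}+\frac1{r_2}$ finishes the proof.

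The only genuinely delicate points — and the main obstacle to a completely clean write-up — are the two summations of geometric-type series where the sign conditions ($\sigma>0$ in the paraproduct case, $s_1+s_2>0$ in the remainder case) are exactly what guarantees convergence, and the careful bookkeeping of which dyadic indices interact via \eqref{e2.2} so that every step reduces to a finitely-supported or one-sided $\ell^1$ convolution in the frequency index, to which Young's inequality for sequences applies with the stated relations among $r,r_1,r_2$. All of this is classical (see \cite{Bahouri-Chemin-Danchin11,Bony81}); I would present the paraproduct estimates first, extract the lemma $\|\dot S_{q-1}f\|_{L^\infty}\lesssim 2^{\sigma q}\|f\|_{\dot B^{-\sigma}_{\infty,r_1}}$ as a short sublemma, and then give the remainder estimate, at each stage suppressing the routine H\"older/Young verifications.
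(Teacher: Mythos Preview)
Your proposal is correct and follows the classical approach; note that the paper does not actually prove this proposition but simply states it as a known result (implicitly referring to \cite{Bahouri-Chemin-Danchin11,Bony81}), so there is no ``paper's own proof'' to compare against. What you have written is essentially the standard textbook argument found in those references.
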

In view of \eqref{Bony-decom}, Proposition \ref{p-TR} and Bernstein's inequalities,  one easily deduces the following  product estimates:
\begin{coro}\label{coro-product}
Let $p\in[1,\infty]$. If $s_1, s_2\le \fr{d}{p}$ and $s_1+s_2>d\max \{0,\fr{2}{p}-1\}$, then there holds
\be\label{product1}
\|uv\|_{\dot{B}^{s_1+s_2-\frac{d}{p}}_{p,1}}\leq C\|u\|_{\dot{B}^{s_1}_{p,1}}\|v\|_{\dot{B}^{s_2}_{p,1}}.
\ee
\end{coro}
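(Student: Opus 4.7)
My plan is to apply Bony's decomposition \eqref{Bony-decom} to write $uv=\dot{T}_uv+\dot{T}_vu+\dot{R}(u,v)$ and estimate the three pieces separately in $\dot{B}^{s_1+s_2-d/p}_{p,1}$, using Proposition \ref{p-TR} as a black box together with the Besov embeddings in Proposition \ref{prop-classical}. By the symmetric roles of $u$ and $v$ (and of $s_1,s_2$), it suffices to treat one of the paraproducts; the two obstructions sit on opposite ends of the regularity range, so the argument splits naturally into paraproducts (handled by the upper bounds $s_i\le d/p$) and remainder (handled by the lower bound $s_1+s_2>d\max\{0,2/p-1\}$).

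For the paraproduct $\dot{T}_uv$, if $s_1<d/p$ I invoke the embedding $\dot{B}^{s_1}_{p,1}\hookrightarrow\dot{B}^{s_1-d/p}_{\infty,1}$ from Proposition \ref{prop-classical} and then the second mapping property in Proposition \ref{p-TR} with $\sigma=d/p-s_1>0$, $r_1=r_2=1$, which yields
\[
\|\dot{T}_uv\|_{\dot{B}^{s_1+s_2-d/p}_{p,1}}\le C\|u\|_{\dot{B}^{s_1-d/p}_{\infty,1}}\|v\|_{\dot{B}^{s_2}_{p,1}}\le C\|u\|_{\dot{B}^{s_1}_{p,1}}\|v\|_{\dot{B}^{s_2}_{p,1}}.
\]
In the limiting case $s_1=d/p$ (and $p<\infty$), I instead use $\dot{B}^{d/p}_{p,1}\hookrightarrow\dot{B}^{0}_{\infty,1}\hookrightarrow L^{\infty}$ and the first mapping property of $\dot{T}$, so that $\|\dot{T}_uv\|_{\dot{B}^{s_2}_{p,1}}\le C\|u\|_{L^{\infty}}\|v\|_{\dot{B}^{s_2}_{p,1}}$; note $s_2=s_1+s_2-d/p$ in this case. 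The estimate for $\dot{T}_vu$ is symmetric.

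The remainder is the delicate piece, because the allowed range of $p_1,p_2$ in the bilinear estimate for $\dot{R}$ requires $1/p_1+1/p_2\le 1$. I distinguish two cases according to whether $p\ge 2$ or $p<2$. When $p\ge 2$, I take $p_1=p_2=p$, giving $\dot{R}(u,v)\in\dot{B}^{s_1+s_2}_{p/2,1}$ (note $s_1+s_2>0$ is exactly the hypothesis when $p\ge 2$), and conclude via the embedding $\dot{B}^{s_1+s_2}_{p/2,1}\hookrightarrow\dot{B}^{s_1+s_2-d/p}_{p,1}$. When $p<2$, I first lift integrability via $\dot{B}^{s_i}_{p,1}\hookrightarrow\dot{B}^{\tilde s_i}_{2,1}$ with $\tilde s_i=s_i-d/p+d/2$, then apply Proposition \ref{p-TR} with $p_1=p_2=2$ (so $1/p_1+1/p_2=1$) to land $\dot{R}(u,v)$ in $\dot{B}^{\tilde s_1+\tilde s_2}_{1,1}$ under the precisely needed hypothesis $\tilde s_1+\tilde s_2=s_1+s_2-d(2/p-1)>0$, and finally embed $\dot{B}^{\tilde s_1+\tilde s_2}_{1,1}\hookrightarrow\dot{B}^{s_1+s_2-d/p}_{p,1}$.

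The main obstacle is bookkeeping the index constraints so that the endpoint cases $s_i=d/p$ and the threshold $s_1+s_2=d(2/p-1)$ remain respected; in particular, the splitting $p\ge 2$ vs $p<2$ is unavoidable because the direct choice $p_1=p_2=p$ for the remainder violates $1/p_1+1/p_2\le 1$ when $p<2$, and that is precisely why the hypothesis involves the quantity $d\max\{0,2/p-1\}$ rather than simply $0$.
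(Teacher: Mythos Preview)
Your proof is correct and follows precisely the route the paper indicates: the paper does not spell out a proof but simply states that the estimate follows from Bony's decomposition \eqref{Bony-decom}, Proposition~\ref{p-TR}, and Bernstein's inequalities, which is exactly the scheme you carry out in detail. Your case split $p\ge 2$ versus $p<2$ for the remainder is the standard way to enforce the constraint $1/p_1+1/p_2\le 1$ in Proposition~\ref{p-TR} and correctly explains the appearance of $d\max\{0,2/p-1\}$ in the hypothesis.
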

In the following, we shall give a commutator estimate, which will be used to deal with the convection terms.
\begin{lem}[\cite{Zi14}]\label{lem-commu}
Let $\Lm:=\sqrt{-\Dl}$, then
\beq\label{commu}
\|[\Lm^{-1}, \dot{S}_{q-1}v\cdot\nb]\ddl u\|_{L^2}\le C\|\nb \dot{S}_{q-1}v\|_{L^\infty}\|\Lm^{-1}\ddl u\|_{L^2}.
\eeq
\end{lem}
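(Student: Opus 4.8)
The plan is to convert the commutator into a kernel operator by exploiting the frequency localization, and then close with Young's inequality. Write $a := \dot S_{q-1}v$, so that the task is to bound $[\Lm^{-1}, a\cdot\nb]\ddl u$ in $L^2$. Since $\ddl u$, and hence $\Lm^{-1}\ddl u$, have Fourier support in the annulus $\{\fr34 2^q\le|\xi|\le\fr83 2^q\}$ by \eqref{support}, Corollary \ref{coro-Bernstein} applied to $\Lm^{-1}\ddl u$ gives $\|\Lm^{-1}\ddl u\|_{L^2}\approx 2^{-q}\|\ddl u\|_{L^2}$; it therefore suffices to prove $\|[\Lm^{-1}, a\cdot\nb]\ddl u\|_{L^2}\les\|\nb a\|_{L^\infty}\,2^{-q}\|\ddl u\|_{L^2}$.

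The key point is that $a\cdot\nb\ddl u$ and $a\cdot\nb(\Lm^{-1}\ddl u)$ have spectra contained in a fixed dilated annulus $\{c\,2^q\le|\xi|\le C\,2^q\}$: indeed $\widehat a$ is supported in a ball of radius $\les 2^q$ while $\widehat{\ddl u}$ lives in an annulus of size $\sim 2^q$, so the spectral sum stays bounded away from the origin. Hence on these quantities $\Lm^{-1}$ may be replaced by the truncated multiplier $\theta_q(\mathrm D)$, $\theta_q(\xi) := |\xi|^{-1}\tl\varphi(2^{-q}\xi)$, where $\tl\varphi\in C_c^\infty(\R^d\setminus\{0\})$ is chosen to equal $1$ on that annulus --- this removes the singularity of $|\xi|^{-1}$ at the origin. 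The kernel of $\theta_q(\mathrm D)$ is $\tl h_q(z)=2^{q(d-1)}h(2^q z)$ with $h:=\mathcal F^{-1}(|\xi|^{-1}\tl\varphi)\in\mathcal S(\R^d)$, and a direct change of variables gives $\|\tl h_q\|_{L^1}\les 2^{-q}$ and $\|\,|\cdot|\,\nb\tl h_q\|_{L^1}\les 2^{-q}$.

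Next I would write $[\Lm^{-1}, a\cdot\nb]\ddl u = \theta_q(\mathrm D)(a\cdot\nb\ddl u) - a\cdot\nb\big(\theta_q(\mathrm D)\ddl u\big)$, express both terms through the kernel $\tl h_q$, and integrate by parts in the first term to move the gradient off $\ddl u$, which produces
\beqno
[\Lm^{-1}, a\cdot\nb]\ddl u(x) &=& \int_{\R^d}(\nb\tl h_q)(x-y)\cdot\big(a(y)-a(x)\big)\,\ddl u(y)\,dy \\
&& \qquad -\int_{\R^d}\tl h_q(x-y)\,(\dv a)(y)\,\ddl u(y)\,dy.
\eeqno
Bounding $|a(y)-a(x)|\le\|\nb a\|_{L^\infty}|x-y|$ in the first integral and $\|\dv a\|_{L^\infty}\le\|\nb a\|_{L^\infty}$ in the second, each term is pointwise dominated by $\|\nb a\|_{L^\infty}$ times the convolution of $|\ddl u|$ with an $L^1$ kernel of norm $\les 2^{-q}$; Young's inequality then yields $\|[\Lm^{-1}, a\cdot\nb]\ddl u\|_{L^2}\les\|\nb a\|_{L^\infty}\,2^{-q}\|\ddl u\|_{L^2}\approx\|\nb\dot S_{q-1}v\|_{L^\infty}\|\Lm^{-1}\ddl u\|_{L^2}$, which is \eqref{commu}. (In the intended application $\dv v=0$, so $\dv a=0$ and the second integral disappears; it is anyway harmless.)

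The main obstacle I anticipate is purely the bookkeeping in the second paragraph: reading off from \eqref{support} the precise dilated annulus containing the spectra of $a\cdot\nb\ddl u$ and $a\cdot\nb\Lm^{-1}\ddl u$, selecting $\tl\varphi$ accordingly, and verifying rigorously that $\theta_q(\mathrm D)$ coincides with $\Lm^{-1}$ on every quantity appearing. Once that localization is pinned down, the remaining estimates are routine scaling identities for Schwartz kernels together with the mean value theorem and Young's inequality.
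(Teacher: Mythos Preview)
The paper does not actually supply a proof of this lemma: it is quoted from \cite{Zi14} and used as a black box. There is therefore nothing to compare your argument against within the present paper.

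That said, your proposal is correct and is the standard route. The spectral bookkeeping you flag as the only obstacle is routine: since $\mathrm{Supp}\,\widehat{\dot S_{q-1}v}\subset\{|\xi|\le\tfrac{2}{3}2^q\}$ and $\mathrm{Supp}\,\widehat{\ddl u}\subset\{\tfrac{3}{4}2^q\le|\xi|\le\tfrac{8}{3}2^q\}$ from \eqref{support}, the product $a\cdot\nb\ddl u$ (and likewise $a\cdot\nb\Lm^{-1}\ddl u$) has spectrum in $\{\tfrac{1}{12}2^q\le|\xi|\le\tfrac{10}{3}2^q\}$, so any $\tl\varphi\in C_c^\infty(\R^d\setminus\{0\})$ with $\tl\varphi\equiv 1$ on $\{\tfrac{1}{12}\le|\xi|\le\tfrac{10}{3}\}$ does the job. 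After that replacement, your integration-by-parts/mean-value/Young argument goes through verbatim, and the scaling computations for $\|\tl h_q\|_{L^1}$ and $\|\,|\cdot|\,\nb\tl h_q\|_{L^1}$ are exactly as you wrote.
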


The divergence free condition allows us to get the following lemma, which plays a key role in this paper.
\begin{lem}[\cite{Zhang14}]\label{d-f}
Let $v$ be a divergence free vector field. We then have
 \beno
\|v^d\|^2_{L^2_{x_h}(L^\infty_{x_d})}\le C\|\dv_h v^h\|_{L^2}\|v^d\|_{L^2},
\quad
\mathrm{and}\quad
\|v^d\|_{L^\infty}\le C\|v^h\|^{\fr12}_{\dot{B}^{\fr{d}{2}}_{2,1}}\|v^d\|^{\fr12}_{\dot{B}^{\fr{d}{2}}_{2,1}}.
\eeno
\end{lem}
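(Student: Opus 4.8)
The plan is to reduce both inequalities to the elementary one-dimensional bound $\|f\|_{L^\infty(\R)}^2\le 2\|f\|_{L^2(\R)}\|f'\|_{L^2(\R)}$—which comes from writing $|f(x_d)|^2=\int_{-\infty}^{x_d}\pr_s(|f(s)|^2)\,ds$ and applying Cauchy--Schwarz—used in the vertical variable $x_d$, together with the identity $\pr_d v^d=-\dv_h v^h$ supplied by $\dv v=0$.

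For the first inequality I would fix $x_h$; since $v^d$ and $\pr_d v^d=-\dv_h v^h$ lie in $L^2(\R^d)$, Fubini gives $v^d(x_h,\cdot)\in H^1(\R)$ for a.e.\ $x_h$, hence $\|v^d(x_h,\cdot)\|_{L^\infty_{x_d}}^2\le 2\|v^d(x_h,\cdot)\|_{L^2_{x_d}}\|\dv_h v^h(x_h,\cdot)\|_{L^2_{x_d}}$. Integrating in $x_h$ and applying Cauchy--Schwarz in $x_h$ then yields $\|v^d\|_{L^2_{x_h}(L^\infty_{x_d})}^2\le 2\|v^d\|_{L^2}\|\dv_h v^h\|_{L^2}$. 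This step should be immediate.

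For the second inequality, the naive route—applying the vertical bound globally and invoking $\|\cdot\|_{L^\infty_{x_h}(L^2_{x_d})}\les\|\cdot\|_{\dot{B}^{(d-1)/2}_{2,1}}$—produces $\|\dv_h v^h\|_{\dot{B}^{(d-1)/2}_{2,1}}\les\|v^h\|_{\dot{B}^{(d+1)/2}_{2,1}}$, one horizontal derivative too many. The fix I would use is to argue dyadically. For each $q$, apply the vertical bound to $\ddl v^d$ (a smooth $L^2$ function with $\pr_d\ddl v^d=-\dv_h\ddl v^h\in L^2$), take the supremum over $x_h$, and use the divergence-free relation to obtain
\[
\|\ddl v^d\|_{L^\infty(\R^d)}^2\le 2\,\|\ddl v^d\|_{L^\infty_{x_h}(L^2_{x_d})}\,\|\dv_h\ddl v^h\|_{L^\infty_{x_h}(L^2_{x_d})}.
\]
Since $\widehat{\ddl w}$ is supported in $|\xi|\sim 2^q$, in particular in $|\xi_h|\les 2^q$, convolving in $x_h$ with a suitably rescaled Schwartz kernel and using Cauchy--Schwarz in $x_h$ (then integrating in $x_d$) yields the mixed-norm Bernstein inequalities $\|\ddl v^d\|_{L^\infty_{x_h}(L^2_{x_d})}\les 2^{q(d-1)/2}\|\ddl v^d\|_{L^2}$ and $\|\dv_h\ddl v^h\|_{L^\infty_{x_h}(L^2_{x_d})}\les 2^{q(d+1)/2}\|\ddl v^h\|_{L^2}$; crucially, the extra horizontal derivative here costs only a factor $2^q$. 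Multiplying,
\[
\|\ddl v^d\|_{L^\infty}\les 2^{qd/2}\|\ddl v^d\|_{L^2}^{1/2}\|\ddl v^h\|_{L^2}^{1/2}=\big(2^{qd/2}\|\ddl v^d\|_{L^2}\big)^{1/2}\big(2^{qd/2}\|\ddl v^h\|_{L^2}\big)^{1/2},
\]
and summing over $q$ with Cauchy--Schwarz applied to the series gives $\|v^d\|_{L^\infty}\le\sum_q\|\ddl v^d\|_{L^\infty}\les\|v^d\|_{\dot{B}^{d/2}_{2,1}}^{1/2}\|v^h\|_{\dot{B}^{d/2}_{2,1}}^{1/2}$.

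I expect the only real obstacle to be recognizing that the vertical Agmon step must be carried out block by block rather than globally: the global version squanders the anisotropic gain because $\dv_h$ consumes a whole horizontal derivative instead of just the factor $2^q$. A bonus of the dyadic formulation is that it avoids integrability subtleties (a generic element of $\dot{B}^{d/2}_{2,1}$ need not be in $L^2$): each $\ddl v^d$ and $\dv_h\ddl v^h$ is a genuine $L^2(\R^d)$ function, so the vertical fundamental theorem of calculus and Fubini are unambiguous. The mixed-norm Bernstein inequality $\|\ddl w\|_{L^\infty_{x_h}(L^2_{x_d})}\les 2^{q(d-1)/2}\|\ddl w\|_{L^2}$ is the only auxiliary estimate needing a short separate check, via the Cauchy--Schwarz inequality for convolution in the horizontal variables alone.
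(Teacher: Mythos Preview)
Your argument is correct. The paper itself does not prove Lemma~\ref{d-f}; it simply quotes the statement from \cite{Zhang14}. Your proof is a self-contained and valid derivation: the first inequality follows exactly as you describe from the one-dimensional bound $\|f\|_{L^\infty(\R)}^2\le 2\|f\|_{L^2}\|f'\|_{L^2}$ applied in $x_d$ together with $\pr_d v^d=-\dv_h v^h$, and for the second inequality your block-by-block version of the same step, combined with the mixed-norm Bernstein inequality $\|\ddl w\|_{L^\infty_{x_h}(L^2_{x_d})}\lesssim 2^{q(d-1)/2}\|\ddl w\|_{L^2}$, gives precisely the result after Cauchy--Schwarz on the dyadic sum. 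Your remark that the vertical Agmon step must be done on each $\ddl v$ rather than on $v$ itself (so that $\dv_h$ costs only a factor $2^q$) is the essential point, and you have identified it correctly.
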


The study of non-stationary PDEs requires spaces of the type
$L^\rho_T(X)=L^\rho(0,T;X)$ for appropriate Banach spaces $X$. In
our case, we expect $X$ to be a  Besov space, so that it
is natural to localize the equations through Littlewood-Paley
decomposition. We then get estimates for each dyadic block and
perform integration in time. But, in doing so, we obtain the bounds
in spaces which are not of the type $L^\rho(0,T;\dot{B}^s_{p,r})$. That
 naturally leads to the following definition introduced by Chemin and Lerner in \cite{CL}.
\begin{defn}\label{defn-chemin-lerne}
For $\rho\in[1,+\infty]$, $s\in\R$, and $T\in(0,+\infty)$, we set
$$\|u\|_{\tilde{L}^\rho_T(\dot{B}^s_{p,r})}=\left\|2^{qs}
\|\dot{\Delta}_qu(t)\|_{L^\rho_T(L^p)}\right\|_{\ell^r}
$$
and denote by
$\tilde{L}^\rho_T(\dot{B}^s_{p,r})$ the subset of distributions
$u\in\mathcal{S}'([0,T]\times \mathbb{R}^N)$ with finite
$\|u\|_{\tilde{L}^\rho_T(\dot{B}^s_{p,r})}$ norm. When $T=+\infty$, the index $T$ is
omitted. We
further denote $\tilde{C}_T(\dot{B}^s_{p,r})=C([0,T];\dot{B}^s_{p,r})\cap
\tilde{L}^\infty_{T}(\dot{B}^s_{p,r}) $.
 \end{defn}
\begin{rem}\label{rem-CM-holder}
All the properties of continuity for the paraproduct, remainder, and product remain true for the Chemin-Lerner spaces. The exponent $\rho$ just has to behave according to H\"{o}lder's ineauality for the time variable.
\end{rem}

\begin{rem}\label{rem-CM-minkowski}
The spaces $\tl{L}^\rho_T(\dot{B}^s_{p,r})$ can be linked with the classical space $L^\rho_T(\dot{B}^s_{p,r})$ via the Minkowski inequality:
\beno
\|u\|_{\tl{L}^\rho_T(\dot{B}^s_{p,r})}\le\|u\|_{L^\rho_T(\dot{B}^s_{p,r})}\quad \mathrm{if}\quad r\ge\rho,\qquad \|u\|_{\tl{L}^\rho_T(\dot{B}^s_{p,r})}\ge\|u\|_{L^\rho_T(\dot{B}^s_{p,r})}\quad \mathrm{if}\quad r\le\rho.
\eeno
\end{rem}
\section{{\em A Priori} Estimates}\label{sec-apriori}
\noindent We begin this section by rewriting the system \eqref{IOBdimensionless} in term of $(u^i,(\Pe\dv\tau)^i),1\le i\le d$, as follows
\beq\label{piOB}
\begin{cases}
\pr_t u^i-\frac{1-\om}{\Rey}\Dl u^i-\frac{1}{\Rey}(\Pe\dv\tau)^i=f^i,\\
\pr_t(\Pe\dv\tau)^i+\frac{1}{\We}(\Pe\dv\tau)^i-\frac{\om}{\We}\Dl u^i=h^i,
\end{cases}
\eeq
where $f^i:=-(\Pe(u\cdot\nb)u)^i$, and $h^i:=-\left(\Pe\dv(u\cdot\nb)\tau+\Pe\dv g_\al(\tau,\nb u)\right)^i$. For simplicity, let us denote $\sig:=\Pe\dv\tau$, then system \eqref{piOB} can be localized as
 \beq\label{loc-piOB}
\begin{cases}
\pr_t u^i_q+\dot{S}_{q-1}u\cdot\nb u^i_q-\frac{1-\om}{\Rey}\Dl u^i_q-\frac{1}{\Rey}\sig^i_q=\tl{f}^i_q,\\
\pr_t\sig^i_q+\dot{S}_{q-1}u\cdot\nb \sig^i_q+\frac{1}{\We}\sig^i_q-\frac{\om}{\We}\Dl u^i_q=\tl{h}^i_q,
\end{cases}
\eeq
where $\tl{f}^i_q:=\dot{S}_{q-1}u\cdot\nb u^i_q+f^i_q$, and $\tl{h}^i_q:=\dot{S}_{q-1}u\cdot\nb \sig^i_q+h^i_q$.

\noindent Denote
\beqno
A^h(t)&:=&\|u^h\|_{\tl{L}^\infty_t(\dot{B}^{\fr{d}{2}-1)}_{2,1}}+\|\sig^h\|_{\tl{L}^\infty_t(\dot{B}^{\fr{d}{2}-1}_{2,1})}+\|u^h\|_{L^1_t(\dot{B}^{\fr{d}{2}+1}_{2,1})}+\|\sig^h\|_{L^1_t(\dot{B}^{\fr{d}{2}+1,\fr{d}{2}-1}_{2,1})},\\
A^d(t)&:=&\|u^d\|_{\tl{L}^\infty_t(\dot{B}^{\fr{d}{2}-1}_{2,1})}+\|\sig^d\|_{\tl{L}^\infty_t(\dot{B}^{\fr{d}{2}-1}_{2,1})}+\|u^d\|_{L^1_t(\dot{B}^{\fr{d}{2}+1}_{2,1})}+\|\sig^d\|_{L^1_t(\dot{B}^{\fr{d}{2}+1,\fr{d}{2}-1}_{2,1})},\\
A^h(0)&:=&\| u^h_0\|_{\dot{B}^{\fr{d}{2}-1}_{2,1}}+\left\|\sig^h_0\right\|_{\dot{B}^{\fr{d}{2}-1}_{2,1}},\quad A^d(0):=\| u^d_0\|_{\dot{B}^{\fr{d}{2}-1}_{2,1}}+\left\|\sig^d_0\right\|_{\dot{B}^{\fr{d}{2}-1}_{2,1}},\\
B(t)&:=&\|\tau\|_{\tl{L}^\infty_t(\dot{B}^{\fr{d}{2}}_{2,1})}+\|\tau\|_{L^1_t(\dot{B}^{\fr{d}{2}}_{2,1})},\quad B(0):=\|\tau_0\|_{\dot{B}^{\fr{d}{2}}_{2,1}}.
\eeqno
We will establish the following proposition.
\begin{prop}\label{P1}
Let $(u, \tau)$ be a solution to system \eqref{IOBdimensionless}. There exist  constants $C_1, C_2$ and $C_3$ depending on $d, \Rey$, and $\We$ but independent of $\om$, such that the following inequalities hold
\beq\label{uh}
\nn A^h(t)&\le&\fr{C_1}{(1-\om)^3}\Bigg(A^h(0)+\int_0^t\left(\| u^h\|_{\dot{B}^{\fr{d}{2}+1}_{2,1}}+\| u^d\|_{\dot{B}^{\fr{d}{2}+1}_{2,1}}\right)\|\tau\|_{\dot{B}^{\fr{d}{2}}_{2,1}}dt'\\
\nn&&+\int_0^t\left(\|u^h\|_{\dot{B}^{\fr{d}{2}-1}_{2,1}}\|u^h\|_{\dot{B}^{\fr{d}{2}+1}_{2,1}}+\|u^h\|^{\fr12}_{\dot{B}^{\fr{d}{2}+1}_{2,1}}\|u^d\|^{\fr12}_{\dot{B}^{\fr{d}{2}-1}_{2,1}}\|u^h\|^{\fr12}_{\dot{B}^{\fr{d}{2}-1}_{2,1}}\|u^d\|^{\fr12}_{\dot{B}^{\fr{d}{2}+1}_{2,1}}\right)dt'\\
&&+\int_0^t\|u^h\|_{\dot{B}^{\fr{d}{2}+1}_{2,1}}^{\fr12}\|u^d\|_{\dot{B}^{\fr{d}{2}+1}_{2,1}}^{\fr12}\| u^h\|_{\dot{B}^{\fr{d}{2}-1}_{2,1}}dt'\Bigg),
\eeq

\beq\label{ud}
\nn A^d(t)&\le&\fr{C_2}{(1-\om)^3}\Bigg(A^d(0)+\int_0^t\left(\| u^h\|_{\dot{B}^{\fr{d}{2}+1}_{2,1}}+\| u^d\|_{\dot{B}^{\fr{d}{2}+1}_{2,1}}\right)\|\tau\|_{\dot{B}^{\fr{d}{2}}_{2,1}}dt'\\
\nn&&+\int_0^t\left(\|u^h\|_{\dot{B}^{\fr{d}{2}-1}_{2,1}}\|u^h\|_{\dot{B}^{\fr{d}{2}+1}_{2,1}}+\|u^h\|^{\fr12}_{\dot{B}^{\fr{d}{2}+1}_{2,1}}\|u^d\|^{\fr12}_{\dot{B}^{\fr{d}{2}-1}_{2,1}}\|u^h\|^{\fr12}_{\dot{B}^{\fr{d}{2}-1}_{2,1}}\|u^d\|^{\fr12}_{\dot{B}^{\fr{d}{2}+1}_{2,1}}\right)dt'\\
&&+\int_0^t\left(\|u^h\|_{\dot{B}^{\fr{d}{2}+1}_{2,1}}+\|u^h\|_{\dot{B}^{\fr{d}{2}+1}_{2,1}}^{\fr12}\|u^d\|_{\dot{B}^{\fr{d}{2}+1}_{2,1}}^{\fr12}\right)\| u^d\|_{\dot{B}^{\fr{d}{2}-1}_{2,1}}dt'\Bigg),
\eeq
and
\be\label{tau'}
 B(t)\le C_3\left(B(0)+\om\left(\|u^h\|_{L^1_t(\dot{B}^{\fr{d}{2}+1}_{2,1})}+\|u^d\|_{L^1_t(\dot{B}^{\fr{d}{2}+1}_{2,1})}\right)+\int_0^t(\|u^h\|_{\dot{B}^{\fr{d}{2}+1}_{2,1}}+\|u^d\|_{\dot{B}^{\fr{d}{2}+1}_{2,1}})\|\tau\|_{\dot{B}^\fr{d}{2}_{2,1}}dt'\right).
\ee
\end{prop}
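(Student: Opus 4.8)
\textbf{Proof plan for Proposition \ref{P1}.}

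The plan is to derive the three inequalities by localizing system \eqref{loc-piOB} in frequency, running a variant energy method on each dyadic block with the quantity $Y_q$ described in the introduction, and then summing the resulting dyadic estimates against the appropriate weights in $q$. First I would treat the velocity/stress estimates \eqref{uh} and \eqref{ud}: for a fixed component $i$, multiply the $u^i_q$ equation and the $\sig^i_q$ equation by the suitable combinations of $u^i_q$, $\sig^i_q$, $\Dl\sig^i_q$ and $\Lm^{-2}\sig^i_q$ dictated by the three regimes $q\le q_1$, $q_1<q\le q_0$, $q>q_0$, integrate over $\R^d$, and combine so that the cross terms involving the coupling parameter $\om$ either cancel (via \eqref{cancelation}) or produce a coercive lower bound. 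In the low-frequency regime the cross term $(u_q\,|\,\tfrac{\We}{\Rey}\Dl\Pe\dv\tau_q)$ is what upgrades the $\sig_q$ damping to a genuine parabolic gain $\sim 2^{2q}\|\sig_q\|_{L^2}$; in the high-frequency regime the damping $\tfrac1\We\sig_q$ dominates directly, and for $q_1<q\le q_0$ the commutator Lemma \ref{lem-commu} is used to handle $\dot S_{q-1}u\cdot\nb$ acting through $\Lm^{-1}$. One then shows $\tfrac{d}{dt}Y_q + c\,\kappa_q Y_q \lesssim \|\tl f^i_q\|_{L^2}+(\text{suitable power of }2^q)\|\tl h^i_q\|_{L^2}$, where $\kappa_q\approx 2^{2q}$ in low frequency and $\kappa_q\approx 1$ (or $2^{2q}$ after using the damping plus Bernstein) in high frequency, and the $(1-\om)^{-3}$ factors come from the ratios of the constants entering $Y_q$ when one passes from $Y_q$ back to $\|u_q\|_{L^2}+2^{q}\|\sig_q\|_{L^2}$-type quantities uniformly in $\om\in(0,1)$. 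Integrating the differential inequality in time, multiplying by $2^{q(\frac d2-1)}$, and summing over $q\in\Z$ (using Definition \ref{def-hybrid} for the hybrid norm on $\sig$) yields the left-hand sides of \eqref{uh}, \eqref{ud}; the data terms $A^h(0)$, $A^d(0)$ appear from the $q$-summed initial values.

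The anisotropy is where \eqref{uh} and \eqref{ud} diverge. For the source terms I would insert Bony's decomposition \eqref{Bony-decom} into $f^i=-(\Pe(u\cdot\nb)u)^i$ and $h^i=-(\Pe\dv(u\cdot\nb)\tau+\Pe\dv g_\al(\tau,\nb u))^i$ and estimate the pieces in $\dot B^{\frac d2-1}_{2,1}$ and $\dot B^{\frac d2+1,\frac d2-1}_{2,1}$ using Corollary \ref{coro-product} and Remark \ref{rem-CM-holder}. For the horizontal equation, the convection term splits into horizontal--horizontal interactions, bounded by $\|u^h\|_{\dot B^{\frac d2-1}_{2,1}}\|u^h\|_{\dot B^{\frac d2+1}_{2,1}}$, and horizontal--vertical interactions. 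Here the key algebraic device is the divergence-free rewriting $u\cdot\nb u^d = u^h\cdot\nb_h u^d - u^d\dv_h u^h$ together with Lemma \ref{d-f}, which converts the dangerous $u^d$-dependence into $\|u^d\|^{1/2}\|u^h\|^{1/2}$-type products (e.g.\ the term $\|u^h\|^{\frac12}_{\dot B^{\frac d2+1}_{2,1}}\|u^d\|^{\frac12}_{\dot B^{\frac d2-1}_{2,1}}\|u^h\|^{\frac12}_{\dot B^{\frac d2-1}_{2,1}}\|u^d\|^{\frac12}_{\dot B^{\frac d2+1}_{2,1}}$ in \eqref{uh}), so that each bad factor of $u^d$ is paired with a small factor of $u^h$. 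For the vertical equation, by contrast, the convection term $u\cdot\nb u^d$ is \emph{linear} in $u^d$ with coefficients built from $u^h$ and $\tau$, which is exactly why the last integral in \eqref{ud} carries a single power $\|u^d\|_{\dot B^{\frac d2-1}_{2,1}}$ (not squared) multiplied by $\|u^h\|_{\dot B^{\frac d2+1}_{2,1}}$ or $\|u^h\|^{1/2}_{\dot B^{\frac d2+1}_{2,1}}\|u^d\|^{1/2}_{\dot B^{\frac d2+1}_{2,1}}$, matching what Lemma \ref{d-f} provides for $\|u^d\|_{L^\infty}$.

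For the stress estimate \eqref{tau'} I would work directly on $\eqref{IOBdimensionless}_2$: $\tau$ solves a transport equation with the damping term $\tfrac1\We\tau$, forcing $-\tfrac1\We(u\cdot\nb\tau+g_\al(\tau,\nb u))$, and source $\tfrac{2\om}{\We}D(u)$. Applying the standard $\tl L^\infty_t(\dot B^{\frac d2}_{2,1})\cap L^1_t(\dot B^{\frac d2}_{2,1})$ estimate for the damped transport equation (the damping furnishes the $L^1_t$ control with an $\om$-independent constant), the linear source contributes $\om(\|u^h\|_{L^1_t(\dot B^{\frac d2+1}_{2,1})}+\|u^d\|_{L^1_t(\dot B^{\frac d2+1}_{2,1})})$ since $\|D(u)\|_{\dot B^{\frac d2}_{2,1}}\lesssim\|u\|_{\dot B^{\frac d2+1}_{2,1}}$, while $g_\al(\tau,\nb u)$ and the commutator from the transport term are quadratic and bounded by $\int_0^t(\|u^h\|_{\dot B^{\frac d2+1}_{2,1}}+\|u^d\|_{\dot B^{\frac d2+1}_{2,1}})\|\tau\|_{\dot B^{\frac d2}_{2,1}}dt'$ via Corollary \ref{coro-product}. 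I expect the main obstacle to be the velocity estimates: arranging the dyadic energy functional $Y_q$ and its three regime definitions so that (i) the coercive dissipation rate $\kappa_q$ is extracted with a constant that is uniform in $\om$ (the source of the explicit $(1-\om)^{-3}$ powers), and (ii) the anisotropic splitting of the nonlinearities, after Bony decomposition and the divergence-free identity, produces \emph{exactly} the products displayed in \eqref{uh}–\eqref{ud} with the right powers of $u^d$ — in particular keeping the vertical equation's nonlinearity at most linear in $u^d$ — is the delicate bookkeeping that drives the whole argument.
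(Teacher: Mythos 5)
Your plan follows essentially the same route as the paper: frequency localization with the three-regime energy functional $Y_q$, the anisotropic Bony decomposition together with Lemma \ref{d-f} to split the nonlinearities in $\tilde f^i_q$ and $\tilde h^i_q$ between horizontal and vertical contributions, and a damped-transport estimate for $\tau$. The only subtlety you pass over lightly is that in the two high-frequency regimes the $Y_q$ differential inequality yields \emph{damping} for $\sigma_q$ but not parabolic smoothing for $u_q$; the paper recovers the latter not by Bernstein but by running a separate $L^2$ energy estimate on the $u$-equation alone (as in \eqref{su1}) and feeding in the $L^1_tL^2$ bound on $\sigma_q$ obtained from the $Y_q$ damping, which is also where several of the $(1-\omega)^{-1}$ losses actually enter.
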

\begin{proof}

\noindent
For $1\le i\le d$, let us denote
\beqno
{Y}^i_q:=\left[\|u^i_q\|_{L^2}^2+2\left\|\fr{\We}{\Rey}\sig^i_q\right\|_{L^2}^2+2(u^i_q|\fr{\We}{\Rey}\sig^i_q)+2( u^i_q|\fr{\We}{\Rey}\Dl\sig^i_q)\right]^{\fr12}, \quad \mathrm{for}\quad q\le q_1,
\eeqno

\beqno
{Y}^i_q:=\left[2\| u^i_q\|_{L^2}^2+\left\|\fr{(1-\om)\We}{\om\Rey}\sig^i_q\right\|_{L^2}^2-2\left( u^i_q|\fr{(1-\om)\We}{\om\Rey}\sig^i_q\right)\right]^{\fr12}, \quad \mathrm{for}\quad q>q_0,
\eeqno

\beqno
{Y}^i_q:=\left[\| u^i_q\|_{L^2}^2+\fr{\We}{\om\Rey}\|\Lm^{-1}\sig^i_q\|_{L^2}^2
\right]^{\fr12}, \quad \mathrm{for}\quad q_1<q\le q_0,
\eeqno
where $q_0$ and $q_1$  are to be determined later.
\bigbreak
\noindent{\bf Estimates of ${Y}^i_q$, $q\in \Z$.}\par
\noindent{\em Step 1: low frequencies:$q\le q_1$.}\par
\noindent Taking the $L^2$ inner product of $\eqref{loc-piOB}_1$ and $\eqref{loc-piOB}_2$ with $u_q^i$ and $\sig^i_q$ respectively, using the divergence free condition $\dv u=0$, we obtain
\be\label{eq-54}
\frac12\fr{d}{dt}\|u^i_q\|_{L^2}^2+\fr{1-\om}{\Rey}\|\Lm u^i_q\|_{L^2}^2-\fr{1}{\Rey}(u^i_q|\sig_q^i)
=(\tl{f}_q^i|u^i_q),
\ee
and
\be\label{eq-55}
\fr12\fr{d}{dt}\|\sig^i_q\|_{L^2}^2+\fr{1}{\We}\|\sig^i_q\|^2_{L^2}-\fr{\om}{\We}(\Dl u^i_q|\sig^i_q)
=(\tl{h}_q^i|\sig^i_q).
\ee
To cancel the ``bad" cross term $(u^i_q|\sig_q^i)$ in \eqref{eq-54}, we need the following equality for the $L^2$ inner product $(u^i_q|\sig_q^i)$ of $u^i_q$ and $\sig_q^i$,
\be\label{eq-57}
\fr{d}{dt}( u^i_q|\sig^i_q)+\fr{\om}{\We}\|\Lm u^i_q\|_{L^2}^2-\fr{1}{\Rey}\|\sig^i_q\|_{L^2}^2
-\fr{1-\om}{\Rey}(\Dl u^i_q|\sig^i_q)+\fr{1}{\We}( u^i_q|\sig^i_q)=(\tl{f}_q^i|\sig^i_q)+(\tl{h}_q^i|u^i_q),
\ee
which is nothing but the sum of $\left(\eqref{loc-piOB}_1|\sig_q^i\right)$ and $\left(\eqref{loc-piOB}_2|u_q^i\right)$. Multiplying \eqref{eq-57} by $\fr{\We}{\Rey}$, adding the resulting equality to \eqref{eq-54}, one deduces that
\beq\label{eq57+}
\nn\frac12\fr{d}{dt}\left(\|u^i_q\|_{L^2}^2+2( u^i_q|\fr{\We}{\Rey}\sig^i_q)\right)&+&\fr{1}{\Rey}\|\Lm u^i_q\|_{L^2}^2-\fr{\We}{(\Rey)^2}\|\sig^i_q\|_{L^2}^2
\\&-&\fr{1-\om}{\Rey}(\Dl u^i_q|\fr{\We}{\Rey}\sig^i_q)
=(\tl{f}_q^i|u^i_q)+\fr{\We}{\Rey}\left((\tl{f}_q^i|\sig^i_q)+(\tl{h}_q^i|u^i_q)\right).
\eeq
Multiplying \eqref{eq-55} by $2\left(\fr{\We}{\Rey}\right)^2$, adding the resulting equality to \eqref{eq57+}, then the negative term $-\fr{\We}{(\Rey)^2}\|\sig^i_q\|_{L^2}^2$ in \eqref{eq57+} is absorbed, and we have
\beq\label{eq57++}
\nn&&\fr12\fr{d}{dt}\left[\| u^i_q\|_{L^2}^2+2\|\fr{\We}{\Rey}\sig^i_q\|_{L^2}^2+2( u^i_q|\fr{\We}{\Rey}\sig^i_q)\right]\\
\nn&&+\fr{1}{\Rey}\|\Lm u^i_q\|_{L^2}^2+\fr{1}{\We}\|\fr{\We}{\Rey}\sig^i_q\|_{L^2}^2-\fr{1+\om}{\Rey}(\Dl u^i_q|\fr{\We}{\Rey}\sig^i_q)\\
&=&(\tl{f}_q^i|u^i_q)+2\left(\fr{\We}{\Rey}\right)^2(\tl{h}_q^i|\sig^i_q)+\fr{\We}{\Rey}\left((\tl{f}_q^i|\sig^i_q)+(\tl{h}_q^i|u^i_q)\right),
\eeq
In order to exhibit the smoothing effect of $\sig^i, 1\le i\le d$ in the low frequency case, we shall make use of the linear coupling term $-\fr{1}{\Rey}\sig^i$ in the first equation of \eqref{piOB}. To do this, taking the $L^2$ inner product of $\eqref{loc-piOB}_1$ with $\Dl\sig^i_q$, and taking the $L^2$ inner product of $\eqref{loc-piOB}_2$ with $\Dl u^i_q$, then adding them together, we are led to
\beq\label{eq-58}
&&\nn\fr{d}{dt}(u^i_q|\Dl \sig^i_q)+\fr{1}{\Rey}\|\Lm\sig^i_q\|_{L^2}^2-\fr{\om}{\We}\|\Dl u^i_q\|_{L^2}^2-\fr{1-\om}{\Rey}(\Dl u^i_q|\Dl\sig^i_q)+\fr{1}{\We}(\Dl  u^i_q|\sig^i_q)\\
&=&(\tl{f}_q^i|\Dl\sig^i_q)+(\tl{h}_q^i|\Dl u^i_q)-(\dot{S}_{q-1}u\cdot\nb u^i_q|\Dl\sig^i_q)-(\dot{S}_{q-1}u\cdot\nb \sig^i_q|\Dl u^i_q).
\eeq
Multiplying  \eqref{eq-58} by $\fr{\We}{\Rey}$, adding the resulting equality to \eqref{eq57++} yields
\beq\label{eq67}
\nn&&\fr12\fr{d}{dt}\left[\| u^i_q\|_{L^2}^2+2\|\fr{\We}{\Rey}\sig^i_q\|_{L^2}^2+2( u^i_q|\fr{\We}{\Rey}\sig^i_q)+2( u^i_q|\fr{\We}{\Rey}\Dl\sig^i_q)\right]\\
\nn&&+\fr{1}{\Rey}\|\Lm u^i_q\|_{L^2}^2-\fr{\om}{\Rey}\|\Dl u^i_q\|_{L^2}^2+\fr{1}{\We}\|\fr{\We}{\Rey}\sig^i_q\|_{L^2}^2+\fr{1}{\We}\|\fr{\We}{\Rey}\Lm\sig^i_q\|^2_{L^2}\\
\nn&&-\fr{1-\om}{\Rey}(\Dl u^i_q|\fr{\We}{\Rey}\Dl\sig^i_q)+\left(\fr{1}{\We}-\fr{1+\om}{\Rey}\right)(\Dl u^i_q|\fr{\We}{\Rey}\sig^i_q)\\
\nn&=&(\tl{f}_q^i|u^i_q)+2\left(\fr{\We}{\Rey}\right)^2(\tl{h}_q^i|\sig^i_q)+\fr{\We}{\Rey}\left((\tl{f}_q^i|\sig^i_q)+(\tl{h}_q^i|u^i_q)\right)\\
&&+\fr{\We}{\Rey}\left((\tl{f}_q^i|\Dl\sig^i_q)+(\tl{h}_q^i|\Dl u^i_q)-(\dot{S}_{q-1}u\cdot\nb u^i_q|\Dl\sig^i_q)-(\dot{S}_{q-1}u\cdot\nb \sig^i_q|\Dl u^i_q)\right),
\eeq
Let us now estimate the cross terms contained in $Y_q^i$ for $q\le q_1$. Indeed, using Cauchy-Schwarz inequality and \eqref{equivalent}, one easily deduces that
\beno
2|(u^i_q|\fr{\We}{\Rey}\sig^i_q)|\le\fr58\|u^i_q\|^2_{L^2}+\fr85\|\fr{\We}{\Rey}\sig^i_q\|^2_{L^2},
\eeno
and
\beno
2|(u^i_q|\fr{\We}{\Rey}\Dl\sig^i_q)|\le\left(\fr832^{q_1}\right)^2\left(\|u^i_q\|^2_{L^2}+\|\fr{\We}{\Rey}\sig^i_q\|^2_{L^2}\right)\le\fr14\left(\|u^i_q\|^2_{L^2}+\|\fr{\We}{\Rey}\sig^i_q\|^2_{L^2}\right),
\eeno
provided
\be\label{eq-ll1}
2^{q_1}\le\fr{3}{16}.
\ee
Consequently,
\beqno
\| u^i_q\|_{L^2}^2+2\|\fr{\We}{\Rey}\sig^i_q\|_{L^2}^2+2( u^i_q|\fr{\We}{\Rey}\sig^i_q)+2( u^i_q|\fr{\We}{\Rey}\Dl\sig^i_q)
\ge\fr18\|u^i_q\|^2_{L^2}+\fr{3}{20}\|\fr{\We}{\Rey}\sig^i_q\|^2_{L^2},
\eeqno
and hence
\be\label{eq-ll2}
\left(Y^i_q\right)^2\approx\|u^i_q\|^2_{L^2}+\|\fr{\We}{\Rey}\sig^i_q\|^2_{L^2},\quad \mathrm{if}\quad q\le q_1,
\ee
with $q_1$ satisfying \eqref{eq-ll1}. Thanks to \eqref{equivalent}, the remaining two cross terms on the left hand side of \eqref{eq67} can be bounded as follows:
\beq\label{eq-20}
\left|\left(\fr{1}{\We}-\fr{1+\om}{\Rey}\right)(\Dl u^i_q|\fr{\We}{\Rey}\sig^i_q)\right|
\nn&\le&2\fr{\Rey+\We}{\We\Rey}\fr832^{q_1}\|\Lm u^i_q\|_{L^2}\left\|\fr{\We}{\Rey}\sig^i_q\right\|_{L^2}\\
\nn&\le&\fr832^{q_1}\fr{\Rey+\We}{\sqrt{\We\Rey}}\left(\fr{1}{\Rey}\|\Lm u^i_q\|^2_{L^2}+\fr{1}{\We}\left\|\fr{\We}{\Rey}\sig^i_q\right\|^2_{L^2}\right)\\
&\le&\fr14\left(\fr{1}{\Rey}\|\Lm u^i_q\|^2_{L^2}+\fr{1}{\We}\left\|\fr{\We}{\Rey}\sig^i_q\right\|^2_{L^2}\right),
\eeq
provided
\be\label{eq-ll3}
2^{q_1}\le\fr{3}{32}\fr{\sqrt{\We\Rey}}{\Rey+\We}.
\ee
Moreover,
\beq\label{eq-21}
\nn\fr{1-\om}{\Rey}\left|( \Dl u^i_q|\fr{\We}{\Rey}\Dl\sig^i_q)\right|
&\le&\fr{1}{2}\left(\fr83 2^{q_1}\right)^2\fr{1}{\Rey}\left(\|\Lm u^i_q\|_{L^2}^2+\left\|\fr{\We}{\Rey}\Lm\sig^i_q\right\|_{L^2}^2\right)\\
&\le&\fr18\fr{1}{\Rey}\|\Lm u^i_q\|_{L^2}^2+\fr12\fr{1}{\We}\left\|\fr{\We}{\Rey}\Lm\sig^i_q\right\|_{L^2}^2,
\eeq
provided
\be\label{eq-ll4}
2^{q_1}\le\fr{3}{16},\quad\textrm{and}\quad 2^{q_1}\le\fr{3}{8}\sqrt{\fr{\Rey}{\We}}.
\ee
Finally, using \eqref{equivalent} again yields
\be\label{eq-23}
\fr{\om}{\Rey}\|\Dl u^i_q\|_{L^2}^2\le\left(\fr83 2^{q_1}\right)^2\fr{1}{\Rey}\|\Lm u^i_q\|^2_{L^2}\le\fr14\fr{1}{\Rey}\|\Lm u^i_q\|^2_{L^2},
\ee
provided \eqref{eq-ll1} holds.
Now collecting \eqref{eq-ll1}, \eqref{eq-ll3} and \eqref{eq-ll4}, we take
\be\label{q_1}
q_1:=\left[\log_2\left(\fr{3}{32}\fr{\sqrt{\We\Rey}}{\Rey+\We}\right)\right].
\ee
From \eqref{eq-20}--\eqref{eq-23}, and using the left hand side of \eqref{equivalent}, we find that, for $q\le q_1$ with $q_1$ satisfying \eqref{q_1}, there holds
\beq\label{eq-24}
\nn&&\fr{1}{\Rey}\|\Lm u^i_q\|_{L^2}^2-\fr{\om}{\Rey}\|\Dl u^i_q\|_{L^2}^2+\fr{1}{\We}\|\fr{\We}{\Rey}\sig^i_q\|_{L^2}^2+\fr{1}{\We}\|\fr{\We}{\Rey}\Lm\sig^i_q\|^2_{L^2}\\
\nn&&-\fr{1-\om}{\Rey}(\Dl u^i_q|\fr{\We}{\Rey}\Dl\sig^i_q)+\left(\fr{1}{\We}-\fr{1+\om}{\Rey}\right)(\Dl u^i_q|\fr{\We}{\Rey}\sig^i_q)\\
\nn&\ge&\fr38\fr{1}{\Rey}\|\Lm u^i_q\|^2_{L^2}
+\fr12\fr{1}{\We}\left\|\fr{\We}{\Rey}\Lm\sig^i_q\right\|_{L^2}^2
+\fr34\fr{1}{\We}\left\|\fr{\We}{\Rey}\sig^i_q\right\|_{L^2}^2\\
&\ge&\fr38\left(\fr342^q\right)^2\min\left\{\fr{1}{\We},\fr{1}{\Rey}\right\}\left(\| u^i_q\|^2_{L^2}
+\left\|\fr{\We}{\Rey}\sig^i_q\right\|_{L^2}^2\right).
\eeq
To bound the right hand side of \eqref{eq67}, firstly integrating by parts and using the divergence free condition $\dv u=0$ yields
\be\label{eq-29}
-(\dot{S}_{q-1}u\cdot\nb u^i_q|\Dl\sig^i_q)-(\dot{S}_{q-1}\cdot\nb\sig^i_q|\Dl u^i_q)
 =(\dot{S}_{q-1}\pr_j u^k|\pr_ku^i_q\pr_j\sig^i_q+\pr_k\sig^i_q\pr_ju^i_q).
 \ee
Accordingly, using Bernstein's inequality, \eqref{q_1} and Lemma \ref{d-f}, we are led to
\beqno
&&\fr{\We}{\Rey}\left|(\dot{S}_{q-1}u\cdot\nb u^i_q|\Dl\sig^i_q)+(\dot{S}_{q-1}\cdot\nb\sig^i_q|\Dl u^i_q)\right|\\
&\le& C2^{2q_1}\|\nb u\|_{L^\infty}\|u^i_q\|_{L^2}\|\fr{\We}{\Rey}\sig^i_q\|_{L^2}
\le C\|u^h\|_{\dot{B}^{\fr{d}{2}+1}_{2,1}}^{\fr12}\|u\|_{\dot{B}^{\fr{d}{2}+1}_{2,1}}^{\fr12}\left(\|u^i_q\|_{L^2}^2+\|\fr{\We}{\Rey}\sig^i_q\|_{L^2}^2\right),
\eeqno
which, together with \eqref{eq-ll2}, implies that
\be\label{eq-ll-r}
\textrm{R.H.S. of\ } \eqref{eq67}\le C\left(\|\tl{f}^i_q\|_{L^2}+\|\fr{\We}{\Rey}\tl{h}_q^i\|_{L^2}+\|u^h\|_{\dot{B}^{\fr{d}{2}+1}_{2,1}}^{\fr12}\|u\|_{\dot{B}^{\fr{d}{2}+1}_{2,1}}^{\fr12}Y^i_q\right)Y^i_q.
 \ee
Substituting \eqref{eq-ll-r} into \eqref{eq67} ,  we infer from \eqref{eq-ll2} and \eqref{eq-24} that there exists a constant $C$ depending on $d, \Rey, \We$, but independent of $\om$, such that if $q\le q_1$, there holds
\be\label{eq68}
\fr{d}{dt}\left({Y}^i_q\right)^2+2^{2q}\left({Y}^i_q\right)^2\le CY^i_q\left(\|\tl{f}^i_q\|_{L^2}+\|\tl{h}_q^i\|_{L^2}+\|u^h\|_{\dot{B}^{\fr{d}{2}+1}_{2,1}}^{\fr12}\|u\|_{\dot{B}^{\fr{d}{2}+1}_{2,1}}^{\fr12}Y^i_q\right).
\ee

\noindent{\em Step 2: high frequencies:$q>q_1$.}\par
\noindent Part {\bf(i)}. $q>q_0.$\par
 First of all, we would like to point out that, in the high frequency case, the cross term $-\fr{\om}{\We}(\Dl u^i_q|\sig^i_q)$ in \eqref{eq-55} is  ``bad''. We will cancel it with the aid of \eqref{eq-57} once more. To this end, multiplying \eqref{eq-55} and \eqref{eq-57}  by $\left(\fr{(1-\om)\We}{\om\Rey}\right)^2$and $-\fr{(1-\om)\We}{\om\Rey}$ respectively, then adding the resulting equalities together yields
\beq\label{eq-60'}
\nn&&\fr12\fr{d}{dt}\left(\left\|\fr{(1-\om)\We}{\om\Rey}\sig^i_q\right\|_{L^2}^2-2\left(u^i_q\bigg|\fr{(1-\om)\We}{\om\Rey}\sig^i_q\right)\right)-\fr{1-\om}{\Rey}\|\Lm u^i_q\|_{L^2}^2\\
\nn&&+\fr{1}{(1-\om)\We}\left\|\fr{(1-\om)\We}{\om\Rey}\sig^i_q\right\|^2_{L^2}-\fr{1}{\We}\left(u^i_q\bigg|\fr{(1-\om)\We}{\om\Rey}\sig^i_q\right)\\
&=&\left(\fr{(1-\om)\We}{\om\Rey}\right)^2 (\tl{h}_q^i|\sig^i_q)-\fr{(1-\om)\We}{\om\Rey}\left((\tl{f}_q^i|\sig^i_q)+(\tl{h}_q^i|u^i_q)\right).
\eeq
To absorb the negative term  $-\fr{1-\om}{\Rey}\|\Lm u^i_q\|_{L^2}^2$ in \eqref{eq-60'}, we multiply \eqref{eq-54} by 2, and add the resulting equality to \eqref{eq-60'} to get
\beq\label{eq-60}
\nn&&\fr12\fr{d}{dt}\left(2\| u^i_q\|_{L^2}^2+\left\|\fr{(1-\om)\We}{\om\Rey}\sig^i_q\right\|_{L^2}^2-2\left(u^i_q\bigg|\fr{(1-\om)\We}{\om\Rey}\sig^i_q\right)\right)+\fr{1-\om}{\Rey}\|\Lm u^i_q\|_{L^2}^2\\
\nn&&+\fr{1}{(1-\om)\We}\left\|\fr{(1-\om)\We}{\om\Rey}\sig^i_q\right\|^2_{L^2}-\fr{2}{\Rey}(\sig^i_q|u^i_q)-\fr{1}{\We}\left(u^i_q\bigg|\fr{(1-\om)\We}{\om\Rey}\sig^i_q\right)\\
&=&2(\tl{f}_q^i|u^i_q)+\left(\fr{(1-\om)\We}{\om\Rey}\right)^2 (\tl{h}_q^i|\sig^i_q)-\fr{(1-\om)\We}{\om\Rey}\left((\tl{f}_q^i|\sig^i_q)+(\tl{h}_q^i|u^i_q)\right).
\eeq
It is easy to verify that
\be\label{eq-61}
2\| u^i_q\|_{L^2}^2+\left\|\fr{(1-\om)\We}{\om\Rey}\sig^i_q\right\|_{L^2}^2-2\left( u^i_q\bigg|\fr{(1-\om)\We}{\om\Rey}\sig^i_q\right)
\approx\| u^i_q\|_{L^2}^2+\left\|\fr{(1-\om)\We}{\om\Rey}\sig^i_q\right\|_{L^2}^2.
\ee
Using Cauchy-Schwarz inequality and \eqref{equivalent} yields
\beq\label{eq-62}
\nn&&\left|\fr{2}{\Rey}(\sig^i_q| u^i_q)+\fr{1}{\We}\left( u^i_q\bigg|\fr{(1-\om)\We}{\om\Rey}\sig^i_q\right)\right|\\
\nn&=&\fr{1+\om}{1-\om}\fr{1}{\We}\left|\left( u^i_q\bigg|\fr{(1-\om)\We}{\om\Rey}\sig^i_q\right)\right|\\
\nn&\le&\fr12\fr{1}{1-\om}\fr{1}{\We}\left\|\fr{(1-\om)\We}{\om\Rey}\sig^i_q\right\|_{L^2}^2+\fr{2}{(1-\om)\We}\| u^i_q\|_{L^2}^2\\
\nn&\le&\fr12\fr{1}{1-\om}\fr{1}{\We}\left\|\fr{(1-\om)\We}{\om\Rey}\sig^i_q\right\|_{L^2}^2+\left(\fr432^{-q_0}\right)^2\fr{2}{(1-\om)\We}\|\Lm  u^i_q\|_{L^2}^2\\
&\le&\fr12\fr{1}{1-\om}\fr{1}{\We}\left\|\fr{(1-\om)\We}{\om\Rey}\sig^i_q\right\|_{L^2}^2+\fr12\fr{1-\om}{\Rey}\|\Lm u^i_q\|_{L^2}^2,
\eeq
provided
\be\label{eq-63}
\left(\fr432^{-q_0}\right)^2\fr{2}{(1-\om)\We}\le\fr12\fr{1-\om}{\Rey},\qquad\mathrm{i. e.}\qquad 2^{q_0}\ge \fr{8}{3(1-\om)}\sqrt{\fr{\Rey}{\We}}.
\ee
Now let us take
\be\label{q_0}
q_0:=\left[\log_2\left(\fr{8}{3(1-\om)}\sqrt{\fr{\Rey}{\We}}\right)\right]+1.
\ee
Then it follows from \eqref{eq-62},  \eqref{eq-63} and \eqref{equivalent} that
\beq\label{eq-64}
\nn&&\fr{1-\om}{\Rey}\|\Lm u^i_q\|_{L^2}^2+\fr{1}{(1-\om)\We}\left\|\fr{(1-\om)\We}{\om\Rey}\sig^i_q\right\|^2_{L^2}\\
\nn&&-\fr{2}{\Rey}(\sig^i_q| u^i_q)-\fr{1}{\We}\left( u^i_q\bigg|\fr{(1-\om)\We}{\om\Rey}\sig^i_q\right)\\
\nn&\ge&\fr12\fr{1-\om}{\Rey}\|\Lm u^i_q\|_{L^2}^2+\fr12\fr{1}{(1-\om)\We}\left\|\fr{(1-\om)\We}{\om\Rey}\sig^i_q\right\|^2_{L^2}\\
\nn&\ge&\fr12\fr{1-\om}{\Rey}\left(\fr342^{q_0}\right)^2\| u^i_q\|_{L^2}^2+\fr12\fr{1}{(1-\om)\We}\left\|\fr{(1-\om)\We}{\om\Rey}\sig^i_q\right\|^2_{L^2}\\
&\ge&\fr12\fr{1}{(1-\om)\We}\left(\| u^i_q\|_{L^2}^2+\left\|\fr{(1-\om)\We}{\om\Rey}\sig^i_q\right\|^2_{L^2}\right).
\eeq
Finally, the right hand side of \eqref{eq-60} can be bounded  as follows,
\be\label{eq-65}
 \textrm{R.H.S. of\ } \eqref{eq-60}\le C\left(\|\tl{f}^i_q\|_{L^2}+\left\|\fr{(1-\om)\We}{\om\Rey}\tl{h}_q^i\right\|_{L^2}\right)\left(\| u^i_q\|_{L^2}+\left\|\fr{(1-\om)\We}{\om\Rey}\sig^i_q\right\|_{L^2}\right).
 \ee
Substituting \eqref{eq-65} into \eqref{eq-60}, using \eqref{eq-61} and \eqref{eq-64}, we find that there exists a constant $C$ independent of  $d, \Rey, \We$, and $\om$, such that if $q> q_0$, there holds
\beq\label{eq66}
\fr{d}{dt}\left(Y^i_q\right)^2+\fr{1}{(1-\om)\We}\left(Y^i_q\right)^2\le CY^i_q\left(\|\tl{f}^i_q\|_{L^2}+\left\|\fr{(1-\om)\We}{\om\Rey}\tl{h}_q^i\right\|_{L^2}\right).
\eeq

\noindent Part {\bf(ii)}. $q_1<q\le q_0$.\par
From the definitions  of $q_1$ and $q_0$ (see \eqref{q_1} and \eqref{q_0}), it is obvious  that there is a gap between the low frequency part $q\le q_1$ and the high frequency part $q>q_0$ since $q_1<q_0$. To overcome this difficulty, it is necessary to resort to some new observation. Considering that $\|\Lm^{-1}\sig^i_q\|_{L^2}$ is equivalent to $\|\sig^i_q\|_{L^2}$ if $q_1< q\le q_0$, we can use $\|\Lm^{-1}\sig^i_q\|_{L^2}$ instead of $\|\sig^i_q\|_{L^2}$ to fill up the gap. To do so, applying the operator $\Lm^{-1}$ to $\eqref{loc-piOB}_2$, and then taking the $L^2$ inner product of the resulting equation with $\Lm^{-1}\sig^i_q$, we obtain
\beq\label{eq-56}
\nn\fr12\fr{d}{dt}\|\Lm^{-1}\sig^i_q\|_{L^2}^2&+&\fr{1}{\We}\|\Lm^{-1}\sig^i_q\|^2_{L^2}+\fr{\om}{\We}(u^i_q|\sig^i_q)\\
&=&(\Lm^{-1}\tl{h}_q^i|\Lm^{-1}\sig^i_q)-(\Lm^{-1}(\dot{S}_{q-1}u\cdot\nb\sig^i_q)|\Lm^{-1}\sig^i_q).
\eeq
Multiplying \eqref{eq-56} by $\fr{\We}{\om\Rey}$, then adding the resulting equation to \eqref{eq-54}, the cross term $(u^i_q|\sig^i_q)$ is canceled, and we arrive at
\beq\label{eq69}
\nn&&\fr12\fr{d}{dt}\left(\| u^i_q\|_{L^2}^2+\fr{\We}{\om\Rey}\|\Lm^{-1}\sig^i_q\|_{L^2}^2
\right)+\fr{1-\om}{\Rey}\|\Lm u^i_q\|_{L^2}^2+\fr{1}{\om\Rey}\|\Lm^{-1}\sig^i_q\|_{L^2}^2\\
&\le&(\tl{f}_q^i|u^i_q)+\fr{\We}{\om\Rey}\left((\Lm^{-1}\tl{h}_q^i|\Lm^{-1}\sig^i_q)-(\Lm^{-1}(\dot{S}_{q-1}u\cdot\nb\sig^i_q)|\Lm^{-1}\sig^i_q)\right),
\eeq
In view of \eqref{equivalent} and \eqref{q_1}, it is not difficult to verify that
\beq\label{eq-n1}
\nn\fr{1-\om}{\Rey}\|\Lm u^i_q\|_{L^2}^2+\fr{1}{\om\Rey}\|\Lm^{-1}\sig^i_q\|_{L^2}^2&\ge& \fr{1-\om}{\Rey}\left(\fr342^{q_1}\right)^2\| u^i_q\|_{L^2}^2+\fr{1}{\om\Rey}\|\Lm^{-1}\sig^i_q\|_{L^2}^2\\
\nn&\ge&\fr{3^4}{2^{16}}\fr{\We(1-\om)}{(\Rey+\We)^2}\| u^i_q\|_{L^2}^2+\fr{1}{\om\Rey}\|\Lm^{-1}\sig^i_q\|_{L^2}^2\\
&\ge&\fr{3^4\We(1-\om)}{2^{16}(\Rey+\We)^2}\left(\| u^i_q\|_{L^2}^2+\fr{\We}{\om\Rey}\|\Lm^{-1}\sig^i_q\|_{L^2}^2\right).
\eeq
Using the divergence free condition $\dv u=0$, one easily deduces that
\beno
(\Lm^{-1}(\dot{S}_{q-1}u\cdot\nb\sig^i_q)|\Lm^{-1}\sig^i_q)\\
=([\Lm^{-1},\dot{S}_{q-1}u\cdot\nb]\sig^i_q)|\Lm^{-1}\sig^i_q),
\eeno
then Lemma \ref{lem-commu} and Lemma \ref{d-f} imply
\be\label{eq-51}
|(\Lm^{-1}(\dot{S}_{q-1}u\cdot\nb\sig^i_q)|\Lm^{-1}\sig^i_q)|\le C\|\nb \dot{S}_{q-1}u\|_{L^\infty}\|\Lm^{-1}\sig^i_q\|_{L^2}^2\le C\|u^h\|_{\dot{B}^{\fr{d}{2}+1}_{2,1}}^{\fr12}\|u\|_{\dot{B}^{\fr{d}{2}+1}_{2,1}}^{\fr12}\|\Lm^{-1}\sig^i_q\|_{L^2}^2.
\ee
Substituting \eqref{eq-n1} and \eqref{eq-51} into \eqref{eq69}, we find that there exists a constant
 $C$ depending on $d$,  but independent of $\Rey, \We, $ and $\om$, such that  if $q_1<q\le q_0$, there holds
\be\label{eq70}
\fr{d}{dt}\left({Y}^i_q\right)^2+\fr{3^4\We(1-\om)}{2^{16}(\Rey+\We)^2}\left({Y}^i_q\right)^2\le C Y^i_q\left(\|\tl {f}^i_q\|_{L^2}+\sqrt{\fr{\We}{\om\Rey}}\|\Lm^{-1}\tl {h}^i_q\|_{L^2}+\|u^h\|_{\dot{B}^{\fr{d}{2}+1}_{2,1}}^{\fr12}\|u\|_{\dot{B}^{\fr{d}{2}+1}_{2,1}}^{\fr12}Y_q^i\right).
\ee

\bigbreak
\noindent{\bf The smoothing effect of $u$.}\par
Now we are in a position to show the smoothing effect of the velocity $u$. In fact, in the low frequency part, the smoothing effect of $u_q$ has been obtained in \eqref{eq68}. More precisely, a direct consequence of \eqref{eq68} and \eqref{eq-ll2} gives
\beq\label{l}
\nn&&\left\| u^i_q(t)\right\|_{L^2}+\left\|\sig^i_q(t)\right\|_{L^2}+2^{2q}\left(\left\|u^i_q\right\|_{L^1_t(L^2)}+\left\|\sig^i_q\right\|_{L^1_t(L^2)}\right)\\
\nn&\le&C\Bigg(\left\| u^i_q(0)\right\|_{L^2}+\left\|\sig^i_q(0)\right\|_{L^2}+\left\|\tl{f}^i_q\right\|_{L^1_t(L^2)}+\left\|\tl{h}_q^i\right\|_{L^1_t(L^2)}\\
&&+\int_0^t\|u^h\|_{\dot{B}^{\fr{d}{2}+1}_{2,1}}^{\fr12}\|u\|_{\dot{B}^{\fr{d}{2}+1}_{2,1}}^{\fr12}\left(\| u^i_q\|_{L^2}+\left\|\sig^i_q\right\|_{L^2}\right)dt'\Bigg),
\eeq
for $q\le q_1$. Nevertheless, in the high frequency part,  we infer from \eqref{eq66}, \eqref{eq-61} and \eqref{eq70} that
\beq\label{hh}
\nn&&\| u^i_q(t)\|_{L^2}+\left\|\fr{(1-\om)\We}{\om\Rey}\sig^i_q(t)\right\|_{L^2}+\fr{1}{(1-\om)\We}\left(\| u^i_q\|_{L^1_t(L^2)}+\left\|\fr{(1-\om)\We}{\om\Rey}\sig^i_q\right\|_{L^1_t(L^2)}\right)\\
&\le&C\left(\| u^i_q(0)\|_{L^2}+\left\|\fr{(1-\om)\We}{\om\Rey}\sig^i_q(0)\right\|_{L^2}+\|\tl{f}^i_q\|_{L^1_t(L^2)}+\left\|\fr{(1-\om)\We}{\om\Rey}\tl{h}_q^i\right\|_{L^1_t(L^2)}\right),
\eeq
for $q>q_0$, and
\beq\label{m}
\nn&&\| u^i_q(t)\|_{L^2}+\sqrt{\fr{\We}{\om\Rey}}\left\|\Lm^{-1}\sig^i_q(t)\right\|_{L^2}+\fr{3^4\We(1-\om)}{2^{16}(\Rey+\We)^2}\left(\| u^i_q\|_{L^1_t(L^2)}+\sqrt{\fr{\We}{\om\Rey}}\left\|\Lm^{-1}\sig^i_q\right\|_{L^1_t(L^2)}\right)\\
\nn&\le&C\left(\| u^i_q(0)\|_{L^2}+\sqrt{\fr{\We}{\om\Rey}}\left\|\Lm^{-1}\sig^i_q(0)\right\|_{L^2}+\|\tl {f}^i_q\|_{L^1_t(L^2)}+\sqrt{\fr{\We}{\om\Rey}}\|\Lm^{-1}\tl {h}^i_q\|_{L^1_t(L^2)}\right.\\
&&\left.+\int_0^t\|u^h\|_{\dot{B}^{\fr{d}{2}+1}_{2,1}}^{\fr12}\|u\|_{\dot{B}^{\fr{d}{2}+1}_{2,1}}^{\fr12}\left(\| u^i_q\|_{L^2}+\sqrt{\fr{\We}{\om\Rey}}\left\|\Lm^{-1}\sig^i_q\right\|_{L^2}\right)dt'\right),
\eeq
for $q_1<q\le q_0$. Recalling the definitions of $q_1$ and $q_0$, and using \eqref{equivalent} again, one deduces that
\be\label{left}
\|\sig^i_q\|_{L^2}\le\fr832^{q_0}\|\Lm^{-1}\sig^i_q\|_{L^2}\le\fr{2^7}{3^2(1-\om)}\sqrt{\fr{\Rey}{\We}}\|\Lm^{-1}\sig^i_q\|_{L^2},
\ee
and
\be\label{right}
\|\sig^i_q\|_{L^2}\ge\fr342^{q_1}\|\Lm^{-1}\sig^i_q\|_{L^2}\ge\fr{3^2}{2^8}\fr{\sqrt{\Rey\We}}{\Rey+\We}\|\Lm^{-1}\sig^i_q\|_{L^2}.
\ee
By virtue of \eqref{left}, \eqref{m} implies
\beq\label{lh}
\nn&&\| u^i_q(t)\|_{L^2}+\fr{3^2(1-\om)\We}{2^7\sqrt{\om}\Rey}\left\|\sig^i_q(t)\right\|_{L^2}\\
\nn&&+\fr{3^4\We(1-\om)}{2^{16}(\Rey+\We)^2}\| u^i_q\|_{L^1_t(L^2)}+\fr{3^6(\We)^2(1-\om)^2}{\sqrt{\om}2^{23}(\Rey+\We)^2}\fr{1}{\Rey}\left\|\sig^i_q\right\|_{L^1_t(L^2)}\\
\nn&\le&C\left(\| u^i_q(0)\|_{L^2}+\sqrt{\fr{\We}{\om\Rey}}\left\|\Lm^{-1}\sig^i_q(0)\right\|_{L^2}+\|\tl {f}^i_q\|_{L^1_t(L^2)}+\sqrt{\fr{\We}{\om\Rey}}\|\Lm^{-1}\tl {h}^i_q\|_{L^1_t(L^2)}\right.\\
&&\left.+\int_0^t\|u^h\|_{\dot{B}^{\fr{d}{2}+1}_{2,1}}^{\fr12}\|u\|_{\dot{B}^{\fr{d}{2}+1}_{2,1}}^{\fr12}\left(\| u^i_q\|_{L^2}+\sqrt{\fr{\We}{\om\Rey}}\left\|\Lm^{-1}\sig^i_q\right\|_{L^2}\right)dt'\right),
\eeq
for $q_1< q\le q_0$. It can be seen from \eqref{hh} and \eqref{lh} that we have not yet obtain the soothing effect of $u_q$ if $q>q_1$. Fortunately,  the damping effect of $\sig_q$ is obtained both in \eqref{hh} and \eqref{lh}. This enables us to get the smoothing effect of $u_q$ by means of the equation of the velocity $u$.  To this end, taking the $L^2$ inner product of $\eqref{loc-piOB}_1$ with $u^i_q$, integrating by parts, we have
\beno
\fr12\fr{d}{dt}\|u^i_q\|_{L^2}^2+\fr{1-\om}{\Rey}\|\Lm u^i_q\|_{L^2}^2
=\fr{1}{\Rey}(\sig^i_q|u^i_q)+(\tl{f}^i_q|u^i_q).
\eeno
Using the left hand side of \eqref{equivalent}, we easily get
\be\label{su1}
\|u^i_q(t)\|_{L^2}+\fr{1-\om}{\Rey}\left(\fr342^{q}\right)^2\|u^i_q\|_{L^1_t(L^2)}
\le\|u^i_q(0)\|_{L^2}+\fr{1}{\Rey}\|\sig^i_q\|_{L^1_t(L^2)}+\|\tl{f}^i_q\|_{L^1_t(L^2)}.
\ee
Multiplying \eqref{hh} by $2\om$, and adding the resulting equation to \eqref{su1}, we arrive at
\beq\label{suhh}
\nn&&\| u^i_q(t)\|_{L^2}+\fr{2(1-\om)\We}{\Rey}\left\|\sig^i_q(t)\right\|_{L^2}+\fr{1-\om}{\Rey}\left(\fr342^{q}\right)^2\|u^i_q\|_{L^1_t(L^2)}+\fr{1}{\Rey}\left\|\sig^i_q\right\|_{L^1_t(L^2)}\\
&\le&C\left(\| u^i_q(0)\|_{L^2}+\left\|\sig^i_q(0)\right\|_{L^2}+\|\tl{f}^i_q\|_{L^1_t(L^2)}+\left\|\tl{h}_q^i\right\|_{L^1_t(L^2)}\right),
\eeq
for $q>q_0$.
Multiplying \eqref{lh} by $\fr{\sqrt{\om}2^{24}(\Rey+\We)^2}{3^6(\We)^2(1-\om)^2}$, and adding the resulting equation to \eqref{su1}, using \eqref{right}, we obtain
\beq\label{sulh}
\nn&&\| u^i_q(t)\|_{L^2}+\fr{2^{17}(\Rey+\We)^2}{3^4\Rey\We(1-\om)}\left\|\sig^i_q(t)\right\|_{L^2}+\fr{1-\om}{\Rey}\left(\fr342^{q}\right)^2\|u^i_q\|_{L^1_t(L^2)}+\fr{1}{\Rey}\left\|\sig^i_q\right\|_{L^1_t(L^2)}\\
\nn&\le&\fr{C}{(1-\om)^2}\Big(\| u^i_q(0)\|_{L^2}+\left\|\sig^i_q(0)\right\|_{L^2}+\|\tl {f}^i_q\|_{L^1_t(L^2)}\\
&&+\|\tl {h}^i_q\|_{L^1_t(L^2)}+\int_0^t\|u^h\|_{\dot{B}^{\fr{d}{2}+1}_{2,1}}^{\fr12}\|u\|_{\dot{B}^{\fr{d}{2}+1}_{2,1}}^{\fr12}\left(\| u^i_q\|_{L^2}+\left\|\sig^i_q\right\|_{L^2}\right)dt'\Big),
\eeq
for $q_1<q\le q_0$. Now collecting \eqref{suhh}, \eqref{sulh}, and \eqref{l}, we conclude that
\beq\label{u}
\nn&&\|u^i\|_{\tl{L}^\infty_t(\dot{B}^{\fr{d}{2}-1}_{2,1})}+(1-\om)\|\sig^i\|_{\tl{L}^\infty_t(\dot{B}^{\fr{d}{2}-1}_{2,1})}+(1-\om)\|u^i\|_{L^1_t(\dot{B}^{\fr{d}{2}+1}_{2,1})}+\|\sig^i\|_{L^1_t(\dot{B}^{\fr{d}{2}+1,\fr{d}{2}-1}_{2,1})}\\
\nn&\le&\fr{C}{(1-\om)^2}\left(\| u^i_0\|_{\dot{B}^{\fr{d}{2}-1}_{2,1}}+\left\|\sig^i_0\right\|_{\dot{B}^{\fr{d}{2}-1}_{2,1}}+\int_0^t\sum_{q\in\Z}2^{q(\fr{d}{2}-1)}\left(\|\tl {f}^i_q\|_{L^2}+\|\tl {h}^i_q\|_{L^2}\right)dt'\right.\\
&&\left.+\int_0^t\|u^h\|_{\dot{B}^{\fr{d}{2}+1}_{2,1}}^{\fr12}\|u\|_{\dot{B}^{\fr{d}{2}+1}_{2,1}}^{\fr12}\left(\| u^i\|_{\dot{B}^{\fr{d}{2}-1}_{2,1}}+\left\|\sig^i\right\|_{\dot{B}^{\fr{d}{2}-1}_{2,1}}\right)dt'\right),
\eeq
with some positive constant $C$ depending on $d, \Rey$ and $\We$, but independent of $\om$.

\noindent{\bf Estimates of $\sum_{q\in\Z}2^{q(\fr{d}{2}-1)}\left(\|\tl {f}^i_q\|_{L^2}+\|\tl {h}^i_q\|_{L^2}\right).$}\par
\noindent Recalling that $\tl{f}^i_q:=\dot{S}_{q-1}u\cdot\nb u^i_q+f^i_q$, and  $f^i:=-(\Pe(u\cdot\nb)u)^i=-(\dl_{im}+\pr_i\Lm^{-2}\pr_m)u^k\pr_ku^m$, in order to estimate  $\sum_{q\in\Z}2^{q(\fr{d}{2}-1)}\|\tl {f}^i_q\|_{L^2}$, it suffices to bound $\sum_{q\in\Z}2^{(\fr{d}{2}-1)}\|\dot{S}_{q-1}u^k\pr_ku^i_q\|_{L^2}$ and $\|(\dl_{im}+\pr_i\Lm^{-2}\pr_m)u^k\pr_ku^m\|_{\dot{B}^{\fr{d}{2}-1}_{2,1}}$.\par
\noindent{\em Estimates of $\sum_{q\in\Z}2^{(\fr{d}{2}-1)}\|\dot{S}_{q-1}u^k\pr_ku^i_q\|_{L^2}$.}
\begin{itemize}
\item Case 1: $1\le i\le d-1$.\par
\beqno
\sum_{q\in\Z}2^{(\fr{d}{2}-1)}\|\dot{S}_{q-1}u^k\pr_ku^i_q\|_{L^2}&\le&C\|\dot{S}_{q-1}u\|_{L^\infty}\sum_{q\in\Z}2^{\fr{d}{2}}\|u^h_q\|_{L^2}\\
&\le&C\|u\|_{L^\infty}\|u^h\|_{\dot{B}^{\fr{d}{2}}_{2,1}}\le C\left(\|u^h\|^2_{\dot{B}^{\fr{d}{2}}_{2,1}}+\|u^h\|_{\dot{B}^{\fr{d}{2}}_{2,1}}\|u^d\|_{\dot{B}^{\fr{d}{2}}_{2,1}}\right).
\eeqno

\item Case 2: $i=d$.\par
\beqno
\sum_{q\in\Z}2^{(\fr{d}{2}-1)}\|\dot{S}_{q-1}u^k\pr_ku^d_q\|_{L^2}&\le&C\sum_{q\in\Z}2^{(\fr{d}{2}-1)}\|\dot{S}_{q-1}u^h\pr_hu^d_q\|_{L^2}+\sum_{q\in\Z}2^{\fr{d}{2}-1}\|\dot{S}_{q-1}u^d\pr_du^d_q\|_{L^2}\\
&\le&C\|u^h\|_{L^\infty}\|u^d\|_{\dot{B}^{\fr{d}{2}}_{2,1}}+C\|u^d\|_{L^\infty}\|\dv_hu^h\|_{\dot{B}^{\fr{d}{2}-1}_{2,1}}\\
&\le&C\|u^h\|_{\dot{B}^{\fr{d}{2}}_{2,1}}\|u^d\|_{\dot{B}^{\fr{d}{2}}_{2,1}}.
\eeqno
\end{itemize}
Combining these two estimates, we are led to
\be\label{f1}
\sum_{q\in\Z}2^{(\fr{d}{2}-1)}\|\dot{S}_{q-1}u^k\pr_ku^i_q\|_{L^2}\le C\left(\|u^h\|^2_{\dot{B}^{\fr{d}{2}}_{2,1}}+\|u^h\|_{\dot{B}^{\fr{d}{2}}_{2,1}}\|u^d\|_{\dot{B}^{\fr{d}{2}}_{2,1}}\right)\quad\mathrm{for}\quad 1\le i\le d.
\ee
\noindent{\em Estimates of $\|(\dl_{im}+\pr_i\Lm^{-2}\pr_m)u^k\pr_ku^m\|_{\dot{B}^{\fr{d}{2}-1}_{2,1}}$.}\par\noindent
Using Proposition \ref{p-TR}, we have
\beqno
&&\|(\dl_{im}+\pr_i\Lm^{-2}\pr_m)\dot{T}_{u^k}\pr_ku^m\|_{\dot{B}^{\fr{d}{2}-1}_{2,1}}\\
&\le& \sum_{1\le m,k\le d-1}\|(\dl_{im}+\pr_i\Lm^{-2}\pr_m)\dot{T}_{u^k}\pr_ku^m\|_{\dot{B}^{\fr{d}{2}-1}_{2,1}}+ \sum_{1\le k\le d-1}\|(\dl_{id}+\pr_i\Lm^{-2}\pr_d)\dot{T}_{u^k}\pr_ku^d\|_{\dot{B}^{\fr{d}{2}-1}_{2,1}}\\
&&+\sum_{1\le m\le d-1}\|(\dl_{im}+\pr_i\Lm^{-2}\pr_m)\dot{T}_{u^d}\pr_du^m\|_{\dot{B}^{\fr{d}{2}-1}_{2,1}}+\|(\dl_{id}+\pr_i\Lm^{-2}\pr_d)\dot{T}_{u^d}\pr_du^d\|_{\dot{B}^{\fr{d}{2}-1}_{2,1}}\\
&\le& C\left(\|u^h\|_{L^\infty}\|\pr_hu^h\|_{\dot{B}^{\fr{d}{2}-1}_{2,1}}+\|u^h\|_{L^\infty}\|\pr_hu^d\|_{\dot{B}^{\fr{d}{2}-1}_{2,1}}+\|u^d\|_{L^\infty}\|\pr_du^h\|_{\dot{B}^{\fr{d}{2}-1}_{2,1}}+\|u^d\|_{L^\infty}\|\dv_hu^h\|_{\dot{B}^{\fr{d}{2}-1}_{2,1}}\right)\\
&\le& C\left(\|u^h\|^2_{\dot{B}^{\fr{d}{2}}_{2,1}}+\|u^h\|_{\dot{B}^{\fr{d}{2}}_{2,1}}\|u^d\|_{\dot{B}^{\fr{d}{2}}_{2,1}}\right),
\eeqno
and
\beqno
&&\|(\dl_{im}+\pr_i\Lm^{-2}\pr_m)\dot{T}'_{\pr_ku^m}u^k\|_{\dot{B}^{\fr{d}{2}-1}_{2,1}}\\
&\le& \sum_{1\le m,k\le d-1}\|(\dl_{im}+\pr_i\Lm^{-2}\pr_m)\dot{T}'_{\pr_ku^m}u^k\|_{\dot{B}^{\fr{d}{2}-1}_{2,1}}+ \sum_{1\le k\le d-1}\|(\dl_{id}+\pr_i\Lm^{-2}\pr_d)\dot{T}'_{\pr_ku^d}u^k\|_{\dot{B}^{\fr{d}{2}-1}_{2,1}}\\
&&+\sum_{1\le m\le d-1}\|(\dl_{im}+\pr_i\Lm^{-2}\pr_m)\dot{T}'_{\pr_du^m}u^d\|_{\dot{B}^{\fr{d}{2}-1}_{2,1}}+\|(\dl_{id}+\pr_i\Lm^{-2}\pr_d)\dot{T}'_{\pr_du^d}u^d\|_{\dot{B}^{\fr{d}{2}-1}_{2,1}}\\
&\le& C\left(\|\pr_hu^h\|_{\dot{B}^{\fr{d}{2}}_{2,1}}\|u^h\|_{\dot{B}^{\fr{d}{2}-1}_{2,1}}+\|\pr_hu^d\|_{\dot{B}^{\fr{d}{2}-1}_{2,1}}\|u^h\|_{\dot{B}^{\fr{d}{2}}_{2,1}}+\|\pr_du^h\|_{\dot{B}^{\fr{d}{2}-1}_{2,1}}\|u^d\|_{\dot{B}^{\fr{d}{2}}_{2,1}}+\|\dv_hu^h\|_{\dot{B}^{\fr{d}{2}-1}_{2,1}}\|u^d\|_{\dot{B}^{\fr{d}{2}}_{2,1}}\right)\\
&\le& C\left(\|u^h\|_{\dot{B}^{\fr{d}{2}-1}_{2,1}}\|u^h\|_{\dot{B}^{\fr{d}{2}+1}_{2,1}}+\|u^h\|_{\dot{B}^{\fr{d}{2}}_{2,1}}\|u^d\|_{\dot{B}^{\fr{d}{2}}_{2,1}}\right).
\eeqno
Therefore, according to Bony's decomposition, there holds
\be\label{f2}
\|(\dl_{im}+\pr_i\Lm^{-2}\pr_m)u^k\pr_ku^m\|_{\dot{B}^{\fr{d}{2}-1}_{2,1}}\le C\left(\|u^h\|_{\dot{B}^{\fr{d}{2}-1}_{2,1}}\|u^h\|_{\dot{B}^{\fr{d}{2}+1}_{2,1}}+\|u^h\|_{\dot{B}^{\fr{d}{2}}_{2,1}}\|u^d\|_{\dot{B}^{\fr{d}{2}}_{2,1}}\right)\quad\mathrm{for}\quad 1\le i\le d.
\ee
It follows from \eqref{f1}, \eqref{f2} and Proposition \ref{prop-classical} that
\beq\label{f}
\nn\sum_{q\in\Z}2^{q(\fr{d}{2}-1)}\|\tl {f}^i_q\|_{L^2}&\le& C\left(\|u^h\|_{\dot{B}^{\fr{d}{2}-1}_{2,1}}\|u^h\|_{\dot{B}^{\fr{d}{2}+1}_{2,1}}+\|u^h\|_{\dot{B}^{\fr{d}{2}}_{2,1}}\|u^d\|_{\dot{B}^{\fr{d}{2}}_{2,1}}\right)\\
&\le&C\left(\|u^h\|_{\dot{B}^{\fr{d}{2}-1}_{2,1}}\|u^h\|_{\dot{B}^{\fr{d}{2}+1}_{2,1}}+\|u^h\|^{\fr12}_{\dot{B}^{\fr{d}{2}+1}_{2,1}}\|u^d\|^{\fr12}_{\dot{B}^{\fr{d}{2}-1}_{2,1}}\|u^h\|^{\fr12}_{\dot{B}^{\fr{d}{2}-1}_{2,1}}\|u^d\|^{\fr12}_{\dot{B}^{\fr{d}{2}+1}_{2,1}}\right)\quad\mathrm{for}\quad 1\le i\le d.
\eeq
Next, we turn to estimate $\sum_{q\in\Z}2^{q(\fr{d}{2}-1)}\|\tl{h}^i_q\|_{L^2}$. Noticing that
\beqno
\tl{h}^i_q&=&\dot{S}_{q-1}u\cdot\nb \sig^i_q+h^i_q\\
&=&\dot{S}_{q-1}u\cdot\nb \sig^i_q-\ddl\left(\Pe\dv(u\cdot\nb)\tau+\Pe\dv g_\al(\tau,\nb u)\right)^i\\
&=&\left[\dot{S}_{q-1}u\cdot\nb \sig^i_q-\ddl\left(\Pe \dot{T}_{u^k}\pr_k\dv\tau\right)^i\right]-\ddl(\dl_{im}+\pr_i\Lm^{-2}\pr_m)\dot{T}_{\pr_lu^k}\pr_k\tau^{l,m}\\
&&-\ddl(\dl_{im}+\pr_i\Lm^{-2}\pr_m)\pr_l\dot{T}'_{\pr_k\tau^{l,m}}u^k-\ddl(\Pe\dv g_{\al}(\tau,\nb u))^i,
\eeqno
we bound $\sum_{q\in\Z}2^{q(\fr{d}{2}-1)}\|\tl{h}^i_q\|_{L^2}$ term by term. First of all, thanks to the commutate estimate Lemma 10.25 in  \cite{Bahouri-Chemin-Danchin11}, we find that
\beq\label{h1}
\nn&&\sum_{q\in\Z}2^{q(\fr{d}{2}-1)}\left\|\left[\dot{S}_{q-1}u\cdot\nb \sig^i_q-\ddl\left(\Pe \dot{T}_{u^k}\pr_k\dv\tau\right)^i\right]\right\|_{L^2}\\
&\le&C\|\nb u\|_{L^\infty}\sum_{q\in\Z}\sum_{|q'-q|\le4}2^{q(\fr{d}{2}-1)}\|\dot{\Dl}_{q'}\dv\tau\|_{L^2}\le C\|u^h\|^{\fr12}_{\dot{B}^{\fr{d}{2}+1}_{2,1}}\|u\|^{\fr12}_{\dot{B}^{\fr{d}{2}+1}_{2,1}}\|\tau\|_{\dot{B}^{\fr{d}{2}}_{2,1}}.
\eeq
The second term contained in $\sum_{q\in\Z}2^{q(\fr{d}{2}-1)}\|\tl{h}^i_q\|_{L^2}$ is nothing but $\|(\dl_{im}+\pr_i\Lm^{-2}\pr_m)\dot{T}_{\pr_lu^k}\pr_k\tau^{l,m}\|_{\dot{B}^{\fr{d}{2}-1}_{2,1}}$, the estimate of which is easier. Indeed, using Proposition \ref{prop-classical} and \ref{p-TR} directly yields
\beno
\|(\dl_{im}+\pr_i\Lm^{-2}\pr_m)\dot{T}_{\pr_lu^k}\pr_k\tau^{l,m}\|_{\dot{B}^{\fr{d}{2}-1}_{2,1}}\le C\|\nb u\|_{L^\infty}\|\nb\tau\|_{\dot{B}^{\fr{d}{2}-1}_{2,1}}\le C\|u^h\|^{\fr12}_{\dot{B}^{\fr{d}{2}+1}_{2,1}}\|u\|^{\fr12}_{\dot{B}^{\fr{d}{2}+1}_{2,1}}\|\tau\|_{\dot{B}^{\fr{d}{2}}_{2,1}}.
\eeno
Now we go to bound the third term,
\beno
\|(\dl_{im}+\pr_i\Lm^{-2}\pr_m)\pr_l\dot{T}'_{\pr_k\tau^{i,m}}u^k\|_{\dot{B}^{\fr{d}{2}-1}_{2,1}}\le C\|\dot{T}'_{\pr_k\tau}u^k\|_{\dot{B}^{\fr{d}{2}}_{2,1}}\le C\|\dot{T}'_{\pr_h\tau}u^h\|_{\dot{B}^{\fr{d}{2}}_{2,1}}+C\|\dot{T}'_{\pr_d\tau}u^d\|_{\dot{B}^{\fr{d}{2}}_{2,1}}.
\eeno
Using Proposition \ref{p-TR} once more and Bernstein's inequality, it is easy to see that
\be\label{h2}
\|\dot{T}'_{\pr_h\tau}u^h\|_{\dot{B}^{\fr{d}{2}}_{2,1}}\le C\|\pr_h\tau\|_{\dot{B}^{-1}_{\infty,\infty}}\|u^h\|_{\dot{B}^{\fr{d}{2}+1}_{2,1}}\le C\|u^h\|_{\dot{B}^{\fr{d}{2}+1}_{2,1}}\|\tau\|_{\dot{B}^{\fr{d}{2}}_{2,1}}.
\ee
However, the estimate of  $\|\dot{T}'_{\pr_d\tau}u^d\|_{\dot{B}^{\fr{d}{2}}_{2,1}}$ is a little bit complicated. In fact, using Lemma \ref{d-f} and Bernstein's inequality, we have
\beqno
\|\dot{T}'_{\pr_d\tau}u^d\|_{\dot{B}^{\fr{d}{2}}_{2,1}}&=&\sum_{q\in Z}2^{q{\fr{d}{2}}}\|\ddl \dot{T}'_{\pr_d\tau}u^d\|_{L^2}\\
&\le&\sum_{q\in Z}\sum_{q-q'\les1}2^{q{\fr{d}{2}}}\|\ddl (\dot{S}_{q'+2}{\pr_d\tau}\dot{\Dl}_{q'}u^d)\|_{L^2}\\
&\le&\sum_{q\in Z}\sum_{q-q'\les1}2^{q{\fr{d}{2}}}\|\dot{S}_{q'+2}{\pr_d\tau}\|_{L^\infty_{x_h}(L^2_{x_d})}\|\dot{\Dl}_{q'}u^d\|_{L^2_{x_h}(L^\infty_{x_d})}\\
&\le&C\sum_{q\in Z}\sum_{q-q'\les1}\sum_{q''\le q'+1}2^{q{\fr{d}{2}}}\|\dot{\Dl}_{q''}{\pr_d\tau}\|_{L^\infty_{x_h}(L^2_{x_d})}\|\dot{\Dl}_{q'}u^d\|^{\fr12}_{L^2}\|\dot{\Dl}_{q'}\dv_hu^h\|^{\fr12}_{L^2}\\
&\le&C\sum_{q\in Z}\sum_{q-q'\les1}\sum_{q''\le q'+1}2^{q{\fr{d}{2}}}2^{q''\fr{d+1}{2}}\|\dot{\Dl}_{q''}{\tau}\|_{L^2}\|\dot{\Dl}_{q'}u^d\|^{\fr12}_{L^2}\|\dot{\Dl}_{q'}\dv_hu^h\|^{\fr12}_{L^2}\\
&\le&C\sum_{q\in Z}\sum_{q-q'\les1}2^{(q-q')\fr{d}{2}}\sum_{q''\le q'+1}2^{(q''-q')\fr12}\left(2^{q''\fr{d}{2}}\|\dot{\Dl}_{q''}{\tau}\|_{L^2}\right)\\
&&\quad\quad\quad\quad\times\left(2^{q'(\fr{d}{2}+1)}\|\dot{\Dl}_{q'}u^d\|_{L^2}\right)^{\fr12}\left(2^{q'\fr{d}{2}}\|\dot{\Dl}_{q'}\dv_hu^h\|_{L^2}\right)^{\fr12}\\
&\le&C\|u^d\|^{\fr12}_{\dot{B}^{\fr{d}{2}+1}_{2,1}}\|u^h\|^{\fr12}_{\dot{B}^{\fr{d}{2}+1}_{2,1}}\|\tau\|_{\dot{B}^{\fr{d}{2}}_{2,1}}.
\eeqno
Finally, we would like to point out that the estimate of the  term $\|(\Pe\dv g_{\al}(\tau,\nb u))^i\|_{\dot{B}^{\fr{d}{2}-1}_{2,1}}$ is somehow ``bad'' due to the appearance of $\sum_{1\le k\le d-1}\|\dot{T}_{\tau^{l,k}}\pr_ku^d\|_{\dot{B}^{\fr{d}{2}}_{2,1}}$ both  for horizontal and vertical  direction. Thus, there is no need to bound $\|(\Pe\dv g_{\al}(\tau,\nb u))^i\|_{\dot{B}^{\fr{d}{2}-1}_{2,1}}$ from different directions. In view of Proposition \ref{prop-classical} and Corollary \ref{coro-product}, we easily get
\beq\label{h3}
\nn\|(\Pe\dv g_{\al}(\tau,\nb u))^i\|_{\dot{B}^{\fr{d}{2}-1}_{2,1}}&\le& C \|g_{\al}(\tau,\nb u)\|_{\dot{B}^{\fr{d}{2}}_{2,1}}\\
&\le& C\|\nb u\|_{\dot{B}^{\fr{d}{2}}_{2,1}}\|\tau\|_{\dot{B}^{\fr{d}{2}}_{2,1}}\le C\left(\| u^h\|_{\dot{B}^{\fr{d}{2}+1}_{2,1}}+\| u^d\|_{\dot{B}^{\fr{d}{2}+1}_{2,1}}\right)\|\tau\|_{\dot{B}^{\fr{d}{2}}_{2,1}}.
\eeq
Combining the above estimates, we find that
\be\label{h}
\sum_{q\in\Z}2^{q(\fr{d}{2}-1)}\|\tl{h}^i_q\|_{L^2}\le C\left(\| u^h\|_{\dot{B}^{\fr{d}{2}+1}_{2,1}}+\| u^d\|_{\dot{B}^{\fr{d}{2}+1}_{2,1}}\right)\|\tau\|_{\dot{B}^{\fr{d}{2}}_{2,1}}.
\ee
Substituting \eqref{f} and \eqref{h} into \eqref{u} yields
\beq\label{u'}
\nn&&\|u^i\|_{\dot{B}^{\fr{d}{2}-1}_{2,1}}+(1-\om)\|\sig^i\|_{\dot{B}^{\fr{d}{2}-1}_{2,1}}+(1-\om)\|u^i\|_{L^1_t(\dot{B}^{\fr{d}{2}+1}_{2,1})}+\|\sig^i\|_{L^1_t(\dot{B}^{\fr{d}{2}+1,\fr{d}{2}-1}_{2,1})}\\
\nn&\le&\fr{C}{(1-\om)^2}\left(\| u^i_0\|_{\dot{B}^{\fr{d}{2}-1}_{2,1}}+\left\|\sig^i_0\right\|_{\dot{B}^{\fr{d}{2}-1}_{2,1}}+\int_0^t\left(\| u^h\|_{\dot{B}^{\fr{d}{2}+1}_{2,1}}+\| u^d\|_{\dot{B}^{\fr{d}{2}+1}_{2,1}}\right)\|\tau\|_{\dot{B}^{\fr{d}{2}}_{2,1}}dt'\right.\\
\nn&&+\int_0^t\left(\|u^h\|_{\dot{B}^{\fr{d}{2}-1}_{2,1}}\|u^h\|_{\dot{B}^{\fr{d}{2}+1}_{2,1}}+\|u^h\|^{\fr12}_{\dot{B}^{\fr{d}{2}+1}_{2,1}}\|u^d\|^{\fr12}_{\dot{B}^{\fr{d}{2}-1}_{2,1}}\|u^h\|^{\fr12}_{\dot{B}^{\fr{d}{2}-1}_{2,1}}\|u^d\|^{\fr12}_{\dot{B}^{\fr{d}{2}+1}_{2,1}}\right)dt'\\
\nn&&\left.+\int_0^t\|u^h\|_{\dot{B}^{\fr{d}{2}+1}_{2,1}}^{\fr12}\|u\|_{\dot{B}^{\fr{d}{2}+1}_{2,1}}^{\fr12}\left(\| u^i\|_{\dot{B}^{\fr{d}{2}-1}_{2,1}}+\left\|\sig^i\right\|_{\dot{B}^{\fr{d}{2}-1}_{2,1}}\right)dt'\right)\\
\nn&\le&\fr{C}{(1-\om)^2}\left(\| u^i_0\|_{\dot{B}^{\fr{d}{2}-1}_{2,1}}+\left\|\sig^i_0\right\|_{\dot{B}^{\fr{d}{2}-1}_{2,1}}+\int_0^t\left(\| u^h\|_{\dot{B}^{\fr{d}{2}+1}_{2,1}}+\| u^d\|_{\dot{B}^{\fr{d}{2}+1}_{2,1}}\right)\|\tau\|_{\dot{B}^{\fr{d}{2}}_{2,1}}dt'\right.\\
\nn&&+\int_0^t\left(\|u^h\|_{\dot{B}^{\fr{d}{2}-1}_{2,1}}\|u^h\|_{\dot{B}^{\fr{d}{2}+1}_{2,1}}+\|u^h\|^{\fr12}_{\dot{B}^{\fr{d}{2}+1}_{2,1}}\|u^d\|^{\fr12}_{\dot{B}^{\fr{d}{2}-1}_{2,1}}\|u^h\|^{\fr12}_{\dot{B}^{\fr{d}{2}-1}_{2,1}}\|u^d\|^{\fr12}_{\dot{B}^{\fr{d}{2}+1}_{2,1}}\right)dt'\\
\nn&&\left.+\int_0^t\left(\|u^h\|_{\dot{B}^{\fr{d}{2}+1}_{2,1}}+\|u^h\|_{\dot{B}^{\fr{d}{2}+1}_{2,1}}^{\fr12}\|u^d\|_{\dot{B}^{\fr{d}{2}+1}_{2,1}}^{\fr12}\right)\| u^i\|_{\dot{B}^{\fr{d}{2}-1}_{2,1}}dt'\right)\quad \mathrm{for\ \ all }\quad i\in\{1,2,\cdots,d\} .
\eeq
Consequently, there exist two constants $C_1$ and $C_2$ depending on $d, \Rey$ and $\We$, but independent of $\om$, such that \eqref{uh} and \eqref{ud} hold.

\noindent{\bf The damping effect of $\tau$.}\par
\noindent In order to deal with the nonlinear coupling between $u$ and $\tau$, it is necessary to take advantage of the damping effect of $\tau$. To do so,  applying $\ddl$ to the equation of $\tau$ yields
\beqno
\pr_t \tau_q+\dot{S}_{q-1}u\cdot\nb \tau_q+\fr{\tau_q}{\We}=\fr{2\om}{\We}D(u_q)+\dot{S}_{q-1}u\cdot\nb \tau_q-\ddl(u\cdot\nb \tau)-\ddl g_\al(\tau,\nb u).
\eeqno
Taking the inner product of the above equation with $\tau_q$, integrating by parts, we have
\beqno
&&\fr12\fr{d}{dt}\|\tau_q\|_{L^2}^2+\fr{1}{\We}\|\tau_q\|_{L^2}^2\\
&=&\fr{2\om}{\We}(D(u_q)|\tau_q)+(\dot{S}_{q-1}u\cdot\nb \tau_q-\ddl(u\cdot\nb \tau)|\tau_q)-(\ddl g_\al(\tau,\nb u)|\tau_q).
\eeqno
It follows that
\beqno
\|\tau_q(t)\|_{L^2}+\fr{1}{\We}\|\tau_q\|_{L^1_t(L^2)}
&\le& C\left(\|\tau_q(0)\|_{L^2}+\om2^q\|u_q\|_{L^2}+\|\dot{S}_{q-1}u\cdot\nb \tau_q-\ddl(\dot{T}_{u^k}\pr_k \tau)\|_{L^2}\right.\\
&&\left.+\|\ddl \dot{T}'_{\pr_k\tau}u^k\|_{L^2}+\|\ddl g_\al(\tau,\nb u)\|_{L^2}\right).
\eeqno
Multiplying the above equation by $2^{q\fr{d}{2}}$, and summing over $q\in \Z$, we find that there exists a constant $C_3$ depending on $d, \Rey$ and $\We$, but independent of $\om$, such that
\beq\label{tau}
\nn&&\|\tau\|_{\tl{L}^\infty_t(\dot{B}^{\fr{d}{2}}_{2,1})}+\|\tau\|_{L^1_t(\dot{B}^{\fr{d}{2}}_{2,1})}\\
\nn&\le& C_3\left(\|\tau_0\|_{\dot{B}^{\fr{d}{2}}_{2,1}}+\om\|u\|_{L^1_t(\dot{B}^{\fr{d}{2}+1}_{2,1})}+\int_0^t\| u\|_{\dot{B}^{\fr{d}{2}+1}_{2,1}}\|\tau\|_{\dot{B}^\fr{d}{2}_{2,1}}dt'\right)\\
&\le&C_3\left(\|\tau_0\|_{\dot{B}^{\fr{d}{2}}_{2,1}}+\om\left(\|u^h\|_{L^1_t(\dot{B}^{\fr{d}{2}+1}_{2,1})}+\|u^d\|_{L^1_t(\dot{B}^{\fr{d}{2}+1}_{2,1})}\right)+\int_0^t(\|u^h\|_{\dot{B}^{\fr{d}{2}+1}_{2,1}}+\|u^d\|_{\dot{B}^{\fr{d}{2}+1}_{2,1}})\|\tau\|_{\dot{B}^\fr{d}{2}_{2,1}}dt'\right),
\eeq
where we have used the following nonlinear estimates which can be obtained in a similar manner to those of \eqref{h1}, \eqref{h2} and \eqref{h3},
\beno
\sum_{q\in\Z}2^{\fr{d}{2}q}\|\dot{S}_{q-1}u\cdot\nb \tau_q-\ddl(\dot{T}_{u^k}\pr_k \tau)\|_{L^2}\le C\|\nb u\|_{L^\infty}\|\tau\|_{\dot{B}^{\fr{d}{2}}_{2,1}},
\eeno
and
\beno
\| \dot{T}'_{\pr_k\tau}u^k\|_{\dot{B}^{\fr{d}{2}}_{2,1}}\le C\| u\|_{\dot{B}^{\fr{d}{2}+1}_{2,1}}\|\tau\|_{\dot{B}^{\fr{d}{2}}_{2,1}}, \qquad \|g_\al(\tau,\nb u)\|_{\dot{B}^{\fr{d}{2}}_{2,1}}\le C\|\nb u\|_{\dot{B}^{\fr{d}{2}}_{2,1}}\|\tau\|_{\dot{B}^{\fr{d}{2}-1}_{2,1}}.
\eeno
This completes the proof Proposition \ref{P1}.
\end{proof}
\begin{rem}\label{coro-h}
We can bound $\sum_{q\in\Z}2^{q(\fr{d}{2}-1)}\|\tl{h}^i_q\|_{L^2}$ without making a distinction between horizontal direction and vertical direction due to the appearance of the nonlinear   term $g_\al(\tau,\nb u)$. Our above estimates of $\sum_{q\in\Z}2^{q(\fr{d}{2}-1)}\|\tl{h}^i_q\|_{L^2}$ aim to reveal that the convection term $u\cdot\nb\tau$ behaves better than $g_\al(\tau,\nb u)$.
\end{rem}

\bigbreak

\noindent
The rest part of this section is to close the estimates \eqref{uh}--\eqref{tau'} in Proposition \ref{P1}. Before proceeding any further, let us denote
\beqno
A^d_0&:=&\fr{4C_2}{(1-\om)^3}\left(A^d(0)+1\right),\\
A^h_0&:=&\fr{16C_1C_2C_3}{(1-\om)^6}\exp\left(\fr{8(C^2_1+C_3)}{(1-\om)^6}\left(A^d_0\right)^2\right)\left(A^h(0)+\om A^d(0)+B(0)\right),\\ B_0&:=&\fr{192C_1C^2_2C^2_3}{(1-\om)^6}\exp\left(\fr{12(C_1^2+C_3)}{(1-\om)^6}\left(A^d_0\right)^2\right)\left( A^h(0)+\om A^d(0)+B(0)\right).
\eeqno

\begin{prop}\label{P2}
Assume that $(u,\tau)$ is a solution of system \eqref{IOBdimensionless} on $[0,T]$, satisfying
\be\label{Assumption1}
A^h(T)\le A^h_0,\quad A^d(T)\le A^d_0,\quad B(T)\le B_0.
\ee
If
\begin{gather}
\label{s1}\fr32A^h_0+\sqrt{A^h_0A^d_0}+B_0\le\fr{(1-\om)^3}{2C_2},\quad A^h_0\left(A^h_0+B_0\right)\le1,\\
\label{s2}B_0\left(1+\fr{2C_2}{(1-\om)^3}\left(A^h_0+B_0\right)\right)\le\fr{(1-\om)^3}{4C_1},\\
\label{s3}\fr32A^h_0\le1,\quad \fr{2C_2C_3}{(1-\om)^3}A^h_0\le\fr12,
\end{gather}
then there hold
\be\label{fr12Assumption1}
A^h(T)\le \fr12A^h_0,\quad A^d(T)\le \fr12A^d_0,\quad\mathrm{and}\quad B(T)\le \fr12B_0.
\ee
\end{prop}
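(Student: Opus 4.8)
The statement is the inductive step of a continuity (bootstrap) argument: one assumes the running bounds \eqref{Assumption1} hold on $[0,T]$ and upgrades them to \eqref{fr12Assumption1}. The plan is to feed \eqref{Assumption1} into the three a priori inequalities \eqref{uh}, \eqref{ud}, \eqref{tau'} of Proposition \ref{P1}, bound every nonlinear time integral, and check that the right-hand sides do not exceed one half of $A^h_0$, $A^d_0$, $B_0$. All the nonlinear integrals are handled by three devices: H\"older in time, pairing an $L^1_t$ norm of a velocity component with an $\tilde L^\infty_t$ norm of $\tau$, so that e.g. $\int_0^t(\|u^h\|_{\dot B^{d/2+1}_{2,1}}+\|u^d\|_{\dot B^{d/2+1}_{2,1}})\|\tau\|_{\dot B^{d/2}_{2,1}}\,dt'\le(\|u^h\|_{L^1_t(\dot B^{d/2+1}_{2,1})}+\|u^d\|_{L^1_t(\dot B^{d/2+1}_{2,1})})\|\tau\|_{\tilde L^\infty_t(\dot B^{d/2}_{2,1})}$; Cauchy--Schwarz in time for the half-power products, so that $\int_0^t\|u^h\|_{\dot B^{d/2+1}_{2,1}}^{1/2}\|u^d\|_{\dot B^{d/2+1}_{2,1}}^{1/2}\,dt'\le\|u^h\|_{L^1_t(\dot B^{d/2+1}_{2,1})}^{1/2}\|u^d\|_{L^1_t(\dot B^{d/2+1}_{2,1})}^{1/2}$; and, for any factor occurring linearly that is controlled in $\tilde L^\infty_t$ (hence by one of $A^h(t)$, $A^d(t)$, $B(t)$), pulling it out and then either invoking Gr\"onwall's inequality or absorbing it into the left-hand side, the absorption being legitimate precisely because \eqref{s1}--\eqref{s3} push the relevant coefficients below $\frac12$.

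I would carry this out in the order $A^d$, then $B$, then $A^h$. In \eqref{ud} the factors $\|u^d\|_{\dot B^{d/2-1}_{2,1}}$, $\|u^h\|_{\dot B^{d/2-1}_{2,1}}$ and $\|\tau\|_{\dot B^{d/2}_{2,1}}$ all appear linearly; pulling them out, the coefficient multiplying $A^d(T)$ is $\lesssim\frac{C_2}{(1-\om)^3}(\frac32A^h_0+(A^h_0A^d_0)^{1/2}+B_0)$, which the first inequality of \eqref{s1} forces to be $\le\frac12$, so it is absorbed; what then remains additively is $\frac{2C_2}{(1-\om)^3}(A^d(0)+(A^h_0)^2+A^h_0B_0)$, and the second inequality of \eqref{s1}, namely $A^h_0(A^h_0+B_0)\le1$, turns this into $\le\frac{2C_2}{(1-\om)^3}(A^d(0)+1)=\frac12A^d_0$. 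The key structural point is that the large quantity $A^d_0$ enters only inside $(A^h_0A^d_0)^{1/2}$, which is small by \eqref{s1}, and as a multiplier of the \emph{absorbed} term, never as an additive contribution. For $B$ one uses the freshly obtained $A^d(T)\le\frac12A^d_0$ to bound $\|u^d\|_{L^1_T(\dot B^{d/2+1}_{2,1})}$ and applies Gr\"onwall to the $\int(\|u^h\|_{\dot B^{d/2+1}_{2,1}}+\|u^d\|_{\dot B^{d/2+1}_{2,1}})\|\tau\|_{\dot B^{d/2}_{2,1}}$ term in \eqref{tau'}; the resulting Gr\"onwall factor $\exp(C_3(\|u^h\|_{L^1_T(\dot B^{d/2+1}_{2,1})}+\|u^d\|_{L^1_T(\dot B^{d/2+1}_{2,1})}))\le \exp(C_3A^h_0)\exp(\frac12C_3A^d_0)$ has first factor $\le e^{1/4}$ by \eqref{s3} and second factor dominated by the prefactor $\exp(\frac{12(C_1^2+C_3)}{(1-\om)^6}(A^d_0)^2)$ present in $B_0$, since $(A^d_0)^2$ dominates $A^d_0$. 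The driving term $C_3(B(0)+\om(\|u^h\|_{L^1_T(\dot B^{d/2+1}_{2,1})}+\|u^d\|_{L^1_T(\dot B^{d/2+1}_{2,1})}))$ must be estimated with the \emph{sharp} $A^d$-bound rather than with the crude $A^d_0$, so that its $\om$-weighted part is genuinely of the order of the data $A^h(0)+\om A^d(0)+B(0)$; once the exponential prefactor of $B_0$ cancels the same prefactor hidden in the data, one is left with a purely numerical inequality valid for the chosen constants, and $B(T)\le\frac12B_0$ follows. Finally \eqref{uh} is treated just as \eqref{ud} was, now with $B(T)\le\frac12B_0$ available: the $u^h$-linear pieces are absorbed via $\frac{2C_2C_3}{(1-\om)^3}A^h_0\le\frac12$ and $\frac32A^h_0\le1$ from \eqref{s3}, the $\tau$-coupling is controlled by \eqref{s2}, and the remaining bilinear pieces by the second inequality of \eqref{s1}, which yields $A^h(T)\le\frac12A^h_0$.

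The real work, and the main obstacle, is the bookkeeping rather than any single inequality. Every nonlinear term must be split correctly between the large factor $A^d_0$ and the small factors $A^h_0$, $B_0$, so that $A^d_0$ never survives as an additive term; one must decide, term by term, whether to use the crude running bound from \eqref{Assumption1} or the sharper bound just derived, so that the exponential prefactor $\exp(\frac{c}{(1-\om)^6}(A^d_0)^2)$ forced into $A^h_0$ and $B_0$ is produced exactly once — by the Gr\"onwall step — and never effectively squared; and the powers of $(1-\om)$ must be tracked throughout, so that every contribution lands below its $\frac12$ threshold with precisely the exponents appearing in the definitions of $A^h_0$, $A^d_0$, $B_0$.
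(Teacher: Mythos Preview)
Your overall strategy is right, but the $A^h$ step has a genuine gap that breaks the argument as written.

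You propose to treat \eqref{uh} ``just as \eqref{ud} was'', i.e.\ by pure absorption. This fails for the mixed term
\[
\int_0^t \|u^h\|^{1/2}_{\dot B^{d/2+1}_{2,1}}\|u^d\|^{1/2}_{\dot B^{d/2-1}_{2,1}}\|u^h\|^{1/2}_{\dot B^{d/2-1}_{2,1}}\|u^d\|^{1/2}_{\dot B^{d/2+1}_{2,1}}\,dt'
\;\le\; A^h(t)\,A^d(t).
\]
Here the coefficient of $A^h(t)$ is $A^d(t)$, which is of order $A^d_0$ and is \emph{large}; none of \eqref{s1}--\eqref{s3} makes it small. (In the $A^d$ estimate the roles are reversed and the coefficient is $A^h_0$, which \emph{is} small---that is why absorption works there but not here.) The paper handles this term by Young's inequality and then Gr\"onwall; the Gr\"onwall kernel picks up $\frac{C_1}{(1-\om)^3}\|u^d\|_{\dot B^{d/2-1}_{2,1}}\|u^d\|_{\dot B^{d/2+1}_{2,1}}$, whose time integral is $\sim (A^d_0)^2$, and \emph{this} is where the prefactor $\exp\bigl(\tfrac{c}{(1-\om)^6}(A^d_0)^2\bigr)$ in $A^h_0$ actually originates---not from the $B$ step as your last paragraph suggests.

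Once you accept that Gr\"onwall is mandatory for $A^h$, your order $A^d\to B\to A^h$ no longer closes with the given constants. After absorbing the $\tau$--coupling via \eqref{s2}, the surviving driving term in the $A^h$ inequality is $\sim \frac{C}{(1-\om)^3}A^d(0)\,B(t)$. If you feed in $B(t)\le\tfrac12 B_0$, the exponential $\exp\bigl(\tfrac{12(C_1^2+C_3)}{(1-\om)^6}(A^d_0)^2\bigr)$ inside $B_0$ gets multiplied by the $A^h$ Gr\"onwall factor $\exp\bigl(\tfrac{2C_1^2}{(1-\om)^6}(A^d_0)^2\bigr)$, producing an exponent $\sim \tfrac{14C_1^2+12C_3}{(1-\om)^6}(A^d_0)^2$, which strictly exceeds the exponent $\tfrac{8(C_1^2+C_3)}{(1-\om)^6}(A^d_0)^2$ in $A^h_0$. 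The bootstrap does not close.

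The paper avoids this by inserting an \emph{intermediate} bound $\tilde B_0$ on $B$ before tackling $A^h$: this $\tilde B_0$ carries only the linear exponent $\exp(2C_3(1+A^d_0))$, so that the $A^h$ Gr\"onwall step produces the quadratic exponential exactly once and lands inside $\tfrac12 A^h_0$. Only \emph{after} obtaining $A^h\le\tfrac12 A^h_0$ does one return to the $B$ estimate, now inserting the sharp $A^h$ bound into the $\om A^h_0$ driving term, and deduce $B\le\tfrac12 B_0$. The correct order is thus $A^d\to B\text{(intermediate)}\to A^h\to B\text{(final)}$.
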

\begin{proof}
From \eqref{ud}, it is easy to verify that
\beq\label{Ad1}
\nn A^d(t)&\le& \fr{C_2}{(1-\om)^3}\left(A^d(0)+A^h(t)\left(A^h(t)+B(t)\right)+\fr12A^h(t)\|u^d\|_{\tl{L}^\infty_t(\dot{B}^{\fr{d}{2}-1}_{2,1})}\right.\\
\nn&&\left.+\fr12A^h(t)\|u^d\|_{{L}^1_t(\dot{B}^{\fr{d}{2}+1}_{2,1})}+\left(A^h(t)+\sqrt{A^h(t)A^d(t)}\right)\|u^d\|_{\tl{L}^\infty_t(\dot{B}^{\fr{d}{2}-1}_{2,1})}+B(t)\|u^d\|_{{L}^1_t(\dot{B}^{\fr{d}{2}+1}_{2,1})}\right)\\
&\le&\fr{C_2}{(1-\om)^3}\left(A^d(0)+A^h(t)\left(A^h(t)+B(t)\right)+\left(\fr32A^h(t)+\sqrt{A^h(t)A^d(t)}+B(t)\right)A^d(t)\right).
\eeq
If
\be\label{H1}
\fr32A^h_0+\sqrt{A^h_0A^d_0}+B_0\le\fr{(1-\om)^3}{2C_2},
\ee
then it follows from \eqref{Assumption1} and \eqref{Ad1} that
\be\label{Ad2}
 A^d(t)\le\fr{2C_2}{(1-\om)^3}\left(A^d(0)+A^h(t)\left(A^h(t)+B(t)\right)\right),\quad t\in [0,T].
\ee
Furthermore, if
\be\label{H2}
A^h_0\left(A^h_0+B_0\right)\le1,
\ee
then \eqref{Ad2} changes to be
\be\label{Ad3}
 A^d(t)\le\fr{2C_2}{(1-\om)^3}\left(A^d(0)+1\right)=\fr12A^d_0,\quad t\in [0,T].
\ee
Next we go to bound $B(t)$. To do this, combining \eqref{tau} and \eqref{Ad2}, we are led to
\beq\label{B1}
B(t)\nn&\le& C_3\left(B(0)+\om A_0^h+\om A^d(t)+\int_0^t(\|u^h\|_{\dot{B}^{\fr{d}{2}+1}_{2,1}}+\|u^d\|_{\dot{B}^{\fr{d}{2}+1}_{2,1}})\|\tau\|_{\dot{B}^\fr{d}{2}_{2,1}}dt'\right)\\
\nn&\le& C_3\left(B(0)+ \om A_0^h+ \fr{2C_2\om}{(1-\om)^3}\left(A^d(0)+\left(A^h(t)\right)^2+A^h(t)B(t)\right)\right.\\
\nn&&\left.+\int_0^t(\|u^h\|_{\dot{B}^{\fr{d}{2}+1}_{2,1}}+\|u^d\|_{\dot{B}^{\fr{d}{2}+1}_{2,1}})\|\tau\|_{\dot{B}^\fr{d}{2}_{2,1}}dt'\right)\\
&\le& C_3\left(B(0)+ \fr32\om A_0^h+ \fr{2C_2\om}{(1-\om)^3}A^d(0)
+\int_0^t(\|u^h\|_{\dot{B}^{\fr{d}{2}+1}_{2,1}}+\|u^d\|_{\dot{B}^{\fr{d}{2}+1}_{2,1}})\|\tau\|_{\dot{B}^\fr{d}{2}_{2,1}}dt'\right)+\fr12B(t),
\eeq
provided
\be\label{H5}
\fr{2C_2C_3}{(1-\om)^3}A^h_0\le\fr12.
\ee
Using Gronwall's inequality, we obtain
\be\label{B2}
B(t)\le 2C_3\exp\left(2C_3(1+A^d_0)\right)\left(B(0)+ \fr32\om A_0^h+ \fr{2C_2\om}{(1-\om)^3}A^d(0)\right)\le\tl{B}_0,
\ee
where $\tl{B}_0$ is defined to be
\be\label{tlB}
\tl{B}_0:=2C_3\exp\left(2C_3(1+A^d_0)\right)\left(B(0)+ \fr32\om + \fr{2C_2\om}{(1-\om)^3}A^d(0)\right).
\ee
To estimate $A^h(t)$, we infer from \eqref{uh} that
\beq\label{Ah1}
\nn A^h(t)&\le&\fr{C_1}{(1-\om)^3}\left(A^h(0)+\int_0^t\left(\|u^h\|_{\dot{B}^{\fr{d}{2}+1}_{2,1}}+\fr{C_1}{(1-\om)^3}\|u^d\|_{\dot{B}^{\fr{d}{2}-1}_{2,1}}\|u^d\|_{\dot{B}^{\fr{d}{2}+1}_{2,1}}\right)\|u^h\|_{\dot{B}^{\fr{d}{2}-1}_{2,1}}dt'\right.\\
\nn&&+\fr{(1-\om)^3}{4C_1}\|u^h\|_{L^1_t(\dot{B}^{\fr{d}{2}+1}_{2,1})}+\fr12\int_0^t\left(\|u^h\|_{\dot{B}^{\fr{d}{2}+1}_{2,1}}+\|u^d\|_{\dot{B}^{\fr{d}{2}+1}_{2,1}}\right)\|u^h\|_{\dot{B}^{\fr{d}{2}-1}_{2,1}}dt'\\
\nn&&\left.+\|\tau\|_{L^\infty_t(\dot{B}^{\fr{d}{2}}_{2,1})}\| u^h\|_{L^1_t(\dot{B}^{\fr{d}{2}+1}_{2,1})}+\int_0^t\| u^d\|_{\dot{B}^{\fr{d}{2}+1}_{2,1}}\|\tau\|_{\dot{B}^{\fr{d}{2}}_{2,1}}dt'\right)\\
\nn&\le&\fr{C_1}{(1-\om)^3}\left(A^h(0)+\|\tau\|_{L^\infty_t(\dot{B}^{\fr{d}{2}}_{2,1})}\| u^d\|_{L^1_t(\dot{B}^{\fr{d}{2}+1}_{2,1})}+\left(\fr{(1-\om)^3}{4C_1}+B(t)\right)\|u^h\|_{L^1_t(\dot{B}^{\fr{d}{2}+1}_{2,1})}\right.\\
&&\left.+\int_0^t\left(\fr32\|u^h\|_{\dot{B}^{\fr{d}{2}+1}_{2,1}}+\left(\fr{C_1}{(1-\om)^3}A_0^d+\fr12\right)\|u^d\|_{\dot{B}^{\fr{d}{2}+1}_{2,1}}\right)\|u^h\|_{\dot{B}^{\fr{d}{2}-1}_{2,1}}dt'\right).
\eeq
In view of \eqref{Ad2}, we have
\beq\label{utau}
\nn\|\tau\|_{L^\infty_t(\dot{B}^{\fr{d}{2}}_{2,1})}\| u^d\|_{L^1_t(\dot{B}^{\fr{d}{2}}_{2,1})}&\le& \fr{2C_2}{(1-\om)^3}\left(A^d(0)+A^h(t)\left(A^h(t)+B(t)\right)\right)B(t)\\
&\le&\fr{2C_2}{(1-\om)^3}A^d(0)B(t)+\fr{2C_2}{(1-\om)^3}\left(A^h(t)+B(t)\right)B(t)A^h(t).
\eeq
Substituting \eqref{utau} into \eqref{Ah1}, and using \eqref{B2}, one deduces that
\beq\label{Ah2}
\nn A^h(t)\nn&\le&\fr{C_1}{(1-\om)^3}\left(A^h(0)+\fr{2C_2}{(1-\om)^3}A^d(0)\tl{B}_0+\left(B(t)\left(1+\fr{2C_2}{(1-\om)^3}\left(A^h(t)+B(t)\right)\right)\right)A^h(t)\right.\\
&&\left.+\int_0^t\left(\fr32\|u^h\|_{\dot{B}^{\fr{d}{2}+1}_{2,1}}+\left(\fr{C_1}{(1-\om)^3}A_0^d+\fr12\right)\|u^d\|_{\dot{B}^{\fr{d}{2}+1}_{2,1}}\right)
\|u^h\|_{\dot{B}^{\fr{d}{2}-1}_{2,1}}dt'\right)+\fr{1}{4}A^h(t).
\eeq
If
\be\label{H3}
B_0\left(1+\fr{2C_2}{(1-\om)^3}\left(A^h_0+B_0\right)\right)\le\fr{(1-\om)^3}{4C_1},
\ee
then \eqref{Ah2} reduces to
\beq\label{Ah3}
\nn A^h(t)&\le&\fr{2C_1}{(1-\om)^3}\left(A^h(0)+\fr{2C_2}{(1-\om)^3}A^d(0)\tl{B}_0\right.\\
&&\left.+\int_0^t\left(\fr32\|u^h\|_{\dot{B}^{\fr{d}{2}+1}_{2,1}}+\left(\fr{C_1}{(1-\om)^3}A_0^d+\fr12\right)\|u^d\|_{\dot{B}^{\fr{d}{2}+1}_{2,1}}\right)\|u^h\|_{\dot{B}^{\fr{d}{2}-1}_{2,1}}dt'\right).
\eeq
Gronwalll's inequality implies that
 \be\label{Ah4}
A^h(t)\le\fr{2C_1}{(1-\om)^3}\exp\left(\fr{2C_1}{(1-\om)^3}(1+A^d_0(1+\fr{C_1}{(1-\om)^3}A^d_0))\right)\left(A^h(0)+\fr{2C_2}{(1-\om)^3}A^d(0)\tl{B}_0\right),
\ee
provided
\be\label{H4}
\fr32A^h_0\le1.
\ee
Substituting \eqref{tlB} into \eqref{Ah4}, we find that
 \beq\label{Ah5}
\nn A^h(t)&\le&\fr{2C_1}{(1-\om)^3}\exp\left(\fr{2C_1^2}{(1-\om)^6}(1+A^d_0)^2\right)\Bigg\{A^h(0)\\
\nn&&+\fr{4C_2C_3}{(1-\om)^3}\exp\left(2C_3(1+A^d_0)\right)\left(A^d(0)B(0)+ \om A^d(0) A^d_0\right)\Bigg\}\\
\nn&\le&\fr{2C_1}{(1-\om)^3}\exp\left(\fr{8C_1^2}{(1-\om)^6}\left(A^d_0\right)^2\right)\Bigg\{A^h(0)+\fr{4C_2C_3}{(1-\om)^3}\exp\left(4C_3A^d_0\right)A^d_0\left(B(0)+ \om A^d(0) \right)\Bigg\}\\
&\le&\fr{8C_1C_2C_3}{(1-\om)^6}\exp\left(\fr{8(C_1^2+C_3)}{(1-\om)^6}\left(A^d_0\right)^2\right)\left(A^h(0)+\om A^d(0)+B(0)\right)=\fr12A^h_0.
\eeq
Finally, substituting \eqref{Ah5} into \eqref{B2}, we get the estimate of $B(t)$
in term of $A^h(0)+\om A^d(0)+B(0)$:
\beq\label{B}
\nn B(t)&\le& \fr{48C_1C_2C^2_3}{(1-\om)^6}\exp\left(4C_3A^d_0\right)\exp\left(\fr{8(C_1^2+C_3)}{(1-\om)^6}\left(A^d_0\right)^2\right)\left(2B(0)+ A^h(0)+2\om A^d(0)\right)\\
&\le& \fr{96C_1C^2_2C^2_3}{(1-\om)^6}\exp\left(\fr{12(C_1^2+C_3)}{(1-\om)^6}\left(A^d_0\right)^2\right)\left( A^h(0)+\om A^d(0)+B(0)\right)=\fr12B_0.
\eeq
The proof of Proposition \ref{P2} is completed.
\end{proof}

\section{Proof of Theorem\ref{thm-g}}
Now we are in a position to prove the maim result of this paper.
\begin{proof}
The local existence and uniqueness of the solution $(u,\tau)\in\mathcal{E}_T^{\fr{d}{2}}$ to system \eqref{IOBdimensionless} with initial data  $(u_0,\tau_0)\in \left(\dot{B}^{\fr{d}{2}-1}_{2,1}\right)^d\times \left(\dot{B}^{\fr{d}{2}}_{2,1}\right)^{d\times d}$ have been proved by Chemin and Masmoudi \cite{CM01}. Let $T^\star$ be the lifespan of $(u,\tau)$ obtained in \cite{CM01}. Define $T_1$ be the supremum of all time $T'\in[0,T^*)$ such that
\be\label{Assumption1'}
A^h(t)\le A^h_0,\quad A^d(t)\le A^d_0,\quad B(t)\le B_0\quad \mathrm{for\ \ all}\quad t\in[0,T'].
\ee
Choosing $C_0$ satisfying \eqref{data} so large that \eqref{s1}--\eqref{s3} hold, then thanks to Proposition \ref{P2}, for all $0\le t<T_1$, we have
\be\label{fr12Assumption1'}
A^h(t)\le \fr12A^h_0,\quad A^d(t)\le \fr12A^d_0,\quad B(t)\le \fr12B_0.
\ee
According to standard continuation method, we obtain $T_1=T^\star$. Moreover, \eqref{fr12Assumption1'} and the embedding $\dot{B}^{\fr{d}{2}}_{2,1}\hookrightarrow L^\infty$ imply that
\be\label{continuity}
\int_0^{T^\star}\left(\|\nb u(t,\cdot)\|_{L^\infty}+\|\tau(t,\cdot)\|_{L^\infty}\right)dt<\infty,
\ee
provided \eqref{data} holds. Thus, the blow-up criterion in \cite{CM01} enables us to conclude  that $T^\star=\infty$. This completes the proof of Theorem \ref{thm-g}.
\end{proof}

\bigbreak
\noindent{\bf Acknowledgments}
\bigbreak
Research supported by China Postdoctoral Science Foundation funded project 2014M552065, and National Natural Science Foundation of China 11401237, 11271322, and 11331005.

\end{document}